\def\today{\ifcase \month \or
   January \or February \or March \or April \or
   May \or June \or July \or August \or
   September \or October \or November \or December \fi
   \space\number\day , \number\year}
  \newcommand\@dotsep{4.5}
  \def\@tocline#1#2#3#4#5#6#7{\relax
     \ifnum #1>\c@tocdepth 
     \else
     \par \addpenalty\@secpenalty\addvspace{#2}%
     \begingroup \hyphenpenalty\@M
     \@ifempty{#4}{%
     \@tempdima\csname r@tocindent\number#1\endcsname\relax
        }{%
         \@tempdima#4\relax
           }%
      \parindent\z@ \leftskip#3\relax \advance\leftskip\@tempdima\relax
      \rightskip\@pnumwidth plus1em \parfillskip-\@pnumwidth
       #5\leavevmode\hskip-\@tempdima #6\relax
       \leaders\hbox{$\m@th
       \mkern \@dotsep mu\hbox{.}\mkern \@dotsep mu$}\hfill
       \hbox to\@pnumwidth{\@tocpagenum{#7}}\par
       \nobreak
        \endgroup
         \fi}
\begin{document}


\makeatletter
\@addtoreset{figure}{section}
\def\thefigure{\thesection.\@arabic\c@figure}
\def\fps@figure{h,t}
\@addtoreset{table}{bsection}

\def\thetable{\thesection.\@arabic\c@table}
\def\fps@table{h, t}
\@addtoreset{equation}{section}
\def\theequation{
\arabic{equation}}
\makeatother

\newcommand{\bfi}{\bfseries\itshape}

\newtheorem{theorem}{Theorem}
\newtheorem{acknowledgment}[theorem]{Acknowledgment}
\newtheorem{corollary}[theorem]{Corollary}
\newtheorem{definition}[theorem]{Definition}
\newtheorem{example}[theorem]{Example}
\newtheorem{lemma}[theorem]{Lemma}
\newtheorem{notation}[theorem]{Notation}
\newtheorem{problem}[theorem]{Problem}
\newtheorem{proposition}[theorem]{Proposition}
\newtheorem{question}[theorem]{Question}
\newtheorem{remark}[theorem]{Remark}
\newtheorem{setting}[theorem]{Setting}

\numberwithin{theorem}{section}
\numberwithin{equation}{section}

\renewcommand{\1}{{\bf 1}}
\newcommand{\Ad}{{\rm Ad}}
\newcommand{\Aut}{{\rm Aut}\,}
\newcommand{\ad}{{\rm ad}}
\newcommand{\botimes}{\bar{\otimes}}
\newcommand{\Ci}{{\mathcal C}^\infty}
\newcommand{\Cl}{{\rm Cl}\,}
\newcommand{\de}{{\rm d}}
\newcommand{\dr}{{\rm dr}\,}
\newcommand{\ee}{{\rm e}}
\newcommand{\End}{{\rm End}\,}
\newcommand{\id}{{\rm id}}
\newcommand{\ie}{{\rm i}}
\newcommand{\Index}{{\rm Index}}
\newcommand{\GL}{{\rm GL}}
\newcommand{\Gr}{{\rm Gr}}
\newcommand{\Hom}{{\rm Hom}\,}
\newcommand{\Ind}{{\rm Ind}}
\newcommand{\Ker}{{\rm Ker}\,}
\newcommand{\pr}{{\rm pr}}
\newcommand{\Ran}{{\rm Ran}\,}
\newcommand{\RRa}{{\rm RR}}
\newcommand{\rank}{{\rm rank}\,}
\renewcommand{\Re}{{\rm Re}\,}
\newcommand{\SO}{{\rm SO}\,}
\newcommand{\sa}{{\rm sa}}
\newcommand{\spa}{{\rm span}\,}
\newcommand{\tsr}{{\rm tsr}}
\newcommand{\Tr}{{\rm Tr}\,}

\newcommand{\CC}{{\mathbb C}}
\newcommand{\HH}{{\mathbb H}}
\newcommand{\QQ}{{\mathbb Q}}
\newcommand{\RR}{{\mathbb R}}
\newcommand{\TT}{{\mathbb T}}

\newcommand{\Ac}{{\mathcal A}}
\newcommand{\Bc}{{\mathcal B}}
\newcommand{\Cc}{{\mathcal C}}
\newcommand{\Fc}{{\mathcal F}}
\newcommand{\Hc}{{\mathcal H}}
\newcommand{\Ic}{{\mathcal I}}
\newcommand{\Jc}{{\mathcal J}}
\newcommand{\Kc}{{\mathcal K}}
\newcommand{\Lc}{{\mathcal L}}
\renewcommand{\Mc}{{\mathcal M}}
\newcommand{\Nc}{{\mathcal N}}
\newcommand{\Oc}{{\mathcal O}}
\newcommand{\Pc}{{\mathcal P}}
\newcommand{\Rc}{{\mathcal R}}
\newcommand{\Sc}{{\mathcal S}}
\newcommand{\Vc}{{\mathcal V}}
\newcommand{\Xc}{{\mathcal X}}
\newcommand{\Yc}{{\mathcal Y}}
\newcommand{\Wc}{{\mathcal W}}

\renewcommand{\gg}{{\mathfrak g}}
\newcommand{\hg}{{\mathfrak h}}
\newcommand{\kg}{{\mathfrak k}}

\newcommand{\ZZ}{\mathbb Z}
\newcommand{\NN}{\mathbb N}

\makeatletter
\title[Quasidiagonality of solvable Lie groups]{Quasidiagonality of
	$C^*$-algebras\\ of solvable Lie groups}
\author{Ingrid Belti\c t\u a and Daniel Belti\c t\u a}
\address{Institute of Mathematics ``Simion Stoilow'' 
of the Romanian Academy, 
P.O. Box 1-764, Bucharest, Romania}
\email{ingrid.beltita@gmail.com, Ingrid.Beltita@imar.ro}
\email{beltita@gmail.com, Daniel.Beltita@imar.ro}

\thanks{This work was supported by a grant of the Romanian National Authority for Scientific Research and
Innovation, CNCS--UEFISCDI, project number PN-II-RU-TE-2014-4-0370}
\makeatother

\begin{abstract} 
	We characterize the  solvable Lie groups of the form $\RR^m\rtimes\RR$, 
	whose $C^*$-algebras are quasidiagonal. Using this result, we determine
  the  connected simply connected solvable Lie groups of type~I whose $C^*$-algebras are strongly quasidiagonal. As a by-product, 
we give also  examples of amenable Lie groups with non-quasidiagonal $C^*$-algebras.

\noindent
\textit{2010 MSC:} Primary 22D25; Secondary 22E27. \\
\textit{Keywords:  unitary dual, $ax+b$-group,  solvable Lie group, quasidiagonal $C^*$-algebra}  
\end{abstract}

\maketitle



\section{Introduction}

Quasidiagonality properties have proved to be very important in the study of $C^*$-algebras and their applications. 
In the case of group $C^*$-algebras, the proof of Rosenberg's conjecture
was recently completed: 
The amenability of a countable discrete group is equivalent to quasidiagonality of its reduced $C^*$-algebra (see \cite{Hd87} and \cite{TiWhWi16}, and also \cite[Prop. VII.7.8]{Da96}). 
However, this statement does not carry over to general locally compact groups, and quasidiagonality properties of their $C^*$-algebras still remain to be understood. 
Interesting results in this direction were obtained in the paper \cite{SW01}. 

In the present paper we address the above problems for connected solvable Lie groups, continuing our quest for understanding topological aspects of their unitary dual (see  \cite{BB16a}, \cite{BB16b},  \cite{BBG16}, \cite{BBL17}).
We show that a connected simply connected solvable Lie group of type~I  has quasidiagonal $C^*$-algbra if and only if it is CCR.
We will also see below that there are  many connected simply connected solvable Lie groups of type~I   that have quasidiagonal $C^*$-algbras. We  also construct pretty large and precisely described classes of Lie groups which are amenable and yet their $C^*$-algebras are not quasidiagonal. 
More specifically, we characterize the generalized 
$ax+b$-groups, that is, solvable Lie groups of the form $\RR^m \rtimes\RR$, 
whose $C^*$-algebras are quasidiagonal. 
The $C^*$-algebras of these groups have been recently studied in \cite{LiLu13}.

\subsection*{Main results and structure of this paper}
Let us state our main result, which will be proved in Section~\ref{sect3}.

\begin{theorem}\label{main1}
	For any connected simply connected solvable Lie group $G$ of type~I with its Lie algebra $\gg$, the following properties are equivalent:
	\begin{enumerate}[(i)]
		\item\label{main1_strqsd} 
		The $C^*$-algebra $C^*(G)$ is strongly quasidiagonal. 
		\item\label{main1_ad} 
		For every $A\in\gg$ the eigenvalues of the linear map $\gg\ni X\mapsto [A,X]\in\gg$ are purely imaginary or zero.  
		\item\label{main1_CCR} 
		For every $[\pi]\in\widehat{G}$ one has $\pi(C^*(G))=\Kc(\Hc_\pi)$.  
	\end{enumerate}
	If $G$ is an exponential solvable Lie group, 
	then the above properties are further equivalent to 
	\begin{enumerate}[(i)]\setcounter{enumi}{3}
		\item\label{main1_nilp} 
		The Lie group $G$ is nilpotent. 
		\item\label{main1_nilp2} 
		The coadjoint orbits of $G$ are closed. 
	\end{enumerate}
\end{theorem}

The proof of Theorem~\ref{main1}  is based on quasidiagonality properties of some special generalized $ax+b$-groups.  

We give a thorough treatment of $C^*$-algebras of generalized $ax+b$-groups in Section~\ref{sect2}. 
Namely, we first establish necessary and sufficient conditions for groups in this class to have isomorphic $C^*$-algebras (see Theorems~\ref{isom-dir} and  \ref{isom-conv}).
We thus generalize the results of \cite{LiLu13} using a completely different method, based on topological equivalence of linear dynamical systems. 
Then we characterize the generalized $ax+b$-groups whose $C^*$-algebras are quasidiagonal (Theorem~\ref{ax+b}). 
As a by-product we show that among the isomorphism classes of the $C^*$-algebra of these amenable Lie groups there is exactly one class which contains non-quasidiagonal $C^*$-algebras (Corollary~\ref{ax+b_cor2}). 

As already mentioned, Section~\ref{sect3} is devoted to the proof of the above Theorem~\ref{main1}. 

In Section~\ref{sect4} we prove that if $G$ is a connected locally compact solvable group of type~I such that $C^*(G)$ has a faithful tracial state, then $G$ is commutative. 
This shows that the study of quasidiagonality of connected solvable Lie groups of type~I is in some sense complementary to the case of discrete groups as seen in \cite{TiWhWi16}, and therefore we had to use here completely different methods for establishing when their $C^*$-algebras are quasidiagonal. 

\subsection*{General notation and terminology} 
For any $C^*$-algebra $\Ac$ we define $\Ac^1:=\Ac$ if $\Ac$ is unital, 
while if $\Ac$ has no unit element, then $\Ac^1:= \Ac \oplus \CC \1\subseteq M(\Ac)$ is the unitization of $\Ac$, 
and $M(\Ac)$ is the multiplier algebra of~$\Ac$. 

We denote by $\widehat{\Ac}$ the set of equivalence classes of irreducible $*$-representations of $\Ac$, endowed with its usual topology. 
If $\pi\colon\Ac\to\Bc(\Hc_\pi)$ is an irreducible $*$-re\-pres\-entation, 
we denote its equivalence class by $[\pi]\in\widehat{\Ac}$. 
 
We say that a $*$-representation $\pi\colon \Ac\to\Bc(\Hc_\pi)$ of a separable  $C^*$-algebra~$\Ac$ is quasidiagonal, 
if one has  
$$(\forall a\in\Ac)\ \lim\limits_{n\to\infty}\Vert \pi(a)P_n-P_n\pi(a)\Vert=0$$
for a suitable sequence $P_n=P_n^*=P_n^2\in\Bc(\Hc_\pi)$ with  $\text{rank}\,P_n<\infty$ for every $n\ge 1$ and $P_n\to\1$ 
in the strong operator topology as $n\to\infty$. 
The separable $C^*$-algebra $\Ac$ is called quasidiagonal if it has a quasidiagonal faithful $*$-representation, 
and $\Ac$ is called strongly quasidiagonal if every irreducible 
$*$-representation of $\Ac$ is quasidiagonal
(see e.g., \cite{Vo93}, \cite{Da96}, \cite{BrOz08}). 

For $G$  a locally compact group, let
 $\widehat{G}$ be its unitary dual, 
that is, the set of all equivalence classes $[\pi]$ of unitary irreducible representations $\pi\colon G\to\Bc(\Hc_\pi)$. 
Then, as usually, we identify $\widehat{G}$ with $\widehat{\Ac}$, 
where $\Ac=C^*(G)$.

By $C^*$-dynamical system we mean a triple $(\Ac,G,\alpha)$, where $\Ac$ is a $C^*$-algebra, $G$ is a locally compact group, and $\alpha\colon G\times\Ac\to\Ac$, $(g,a)\mapsto\alpha_g(a)$ is a continuous action of $G$ by $*$-automorphisms of $\Ac$. 
Sometimes we denote a $C^*$-dynamical system simply as $G\times\Ac\to\Ac$, $(g,a)\mapsto g\cdot a$, especially when the notation for the action $\alpha$ is not essential.

In this paper by a Lie group we mean a finite dimensional real Lie group. 
We recall that a connected Lie group $G$ is solvable if its Lie algebra 
$\gg$ is solvable, that is, 
$$ \gg^{(n)}=\{0\} \quad \text{for $n$ large enough}, $$
where 
$$ \gg^{(0)}= \gg, \; \gg^{(k)}= [\gg^{(k-1)}, \gg^{(k-1)}] \; \text{for $k\ge 1$}.$$
This is further equivalent with the fact that $G$ is solvable as a discrete group. 

An exponential Lie group is a Lie group $G$ whose exponential map $\exp_G\colon\gg\to G$ is a bijection, where $\gg$ is the Lie algebra of~$G$. All exponential Lie groups are solvable. 
See for instance \cite{FuLu15} for more details.

\section{On the $C^*$-algebras of generalized $ax+b$-groups}
\label{sect2}

In this section, we fix a finite-dimensional real vector space~$\Vc$.  

For any     
$D\in \End(\Vc)$ we 
define the corresponding \emph{generalized $ax+b$-group} as 
the 
connected simply connected Lie group~$G_D:=\Vc\rtimes_{\alpha_D}\RR$,  
where the semidirect product of the abelian Lie groups $(\RR,+)$ and $(\Vc,+)$ involves the action 
$$\alpha_D\colon\Vc\times\RR\to\Vc, 
\quad \alpha_D(v,t):=\ee^{tD}v$$
so the group operation is given by $(v_1,t_1)\cdot(v_2,t_2)=(v_1+\ee^{t_1D}v_2,t_1+t_2)$ for all $v_1,v_2\in\Vc$ and $t_1, t_2\in\RR$. 

We also denote by $\Vc/\alpha_D$ the orbit space of the group action~$\alpha_D$, endowed with its quotient topology.

We consider the $\CC$-linear extension $D\colon\Vc_{\CC}\to\Vc_{\CC}$, 
and we denote its spectrum by $\sigma(D)$, 
where $\Vc_{\CC}:=\Vc\otimes_{\RR}\CC$. 
For  $\mu\in \sigma(D)\cap(\CC \setminus \RR)$, the real generalized eigenspace for $\mu$, $\overline{\mu}$ is the linear subspace of $\Vc$ given by 
$$ E^D(\mu):=\{v_1, v_2\in \Vc\mid v_1+ \ie v_2 \in \Ker(D-\mu \1)^m\},$$
where $m$ is the dimension of the largest Jordan block for $\mu$. 
(See \cite[Ex. 1.6.9]{CW14}.)
If  $\mu\in \sigma(D)\cap \RR$, then the real generalized eigenspace for $\mu$ is $E^D(\mu):=\Ker(D-\mu \1)^m$, 
where $m$ is again the dimension of the largest Jordan block for $\mu$. 
For every $\mu\in\CC\setminus\sigma(D)$ we also define  $E^D(\mu):=\{0\}$. 

We also define 
$\Vc^D_{\pm}:=\bigoplus\limits_{\pm\Re\mu>0}E^D(\mu)$ 
and $\Vc^D_0:=\bigoplus\limits_{\Re\mu=0}E^D(\mu)$, and then one has 
$\Vc^D=\Vc^D_{-}\oplus\Vc^D_0\oplus\Vc^D_{+}$. 
(See for instance \cite[\S 1.4]{CW14}.) 
With this notation we put  
\begin{equation}\label{npm}
n^D_{\pm}:=\dim\Vc^D_{\pm}.
\end{equation}

\subsection{Isomorphism classes of $C^*$-algebras of generalized $ax+b$-groups}

\begin{lemma}\label{Ladis}
	If  $D_1,D_2\in \End(\Vc)$, then the following assertions are equivalent: 
	\begin{enumerate}[{\rm(i)}]
		\item\label{Ladis_item1} 
		There exists a homeomorphism $\Phi\colon\Vc\to\Vc$ satisfying 
		$\Phi\circ\ee^{tD_1}=\ee^{tD_2}\circ\Phi$ for every $t\in\RR$. 
		\item\label{Ladis_item2} 
		One has $n^{D_1}_{\pm}=n^{D_2}_{\pm}$ and $T D_1\vert_{\Vc^{D_1}_0}=D_2T$ for a suitable linear isomorphism $T\colon\Vc^{D_1}_0\to\Vc^{D_2}_0$. 
	\end{enumerate}
\end{lemma}

\begin{proof}
	See \cite[Th. B']{La73}.
	\end{proof}

A proof of Lemma~\ref{Ladis} in the special case of hyperbolic maps 
(that is, under the additional assumption $\dim\Vc^{D_1}_0=\dim\Vc^{D_2}_0=\{0\}$) 
can also be found for instance in \cite[Th. 2.2.5]{CW14}.


\begin{theorem}\label{isom-dir}
	Let  $D\in \End(\Vc)$ and define
	$$\widetilde{D}=
	\begin{pmatrix}
	-\1_{n_{-}} & 0 & 0 \\
	0 & D_0 & 0 \\
	0 & 0 & \1_{n_{+}}
	\end{pmatrix}$$
	with respect to the direct sum decomposition $\Vc=\Vc^{D}_{-}\oplus\Vc^{D}_0\oplus\Vc^{D}_{+}$, 
	where $n_\pm:=n^{D}_\pm$, we have denoted by $\1_{n_\pm}$ the identity map of $\Vc^{D}_\pm$, and $D_0:=D\vert_{\Vc^{D}_0}$. 	 
	
	Then the $C^*$-algebras $C^*(G_D)$ and $C^*(G_{\widetilde{D}})$ are $*$-isomorphic. 
\end{theorem}

\begin{proof} 
	We denote $\Wc:=\Vc^*$ and also let $T$ be the identity map on $\Vc^{D}_0$.
	 
	Then one has 	
	$\dim\Wc^{D^*}_{\pm}=\dim\Wc^{{\widetilde{D}}}_{\pm}$ and $T^* {\widetilde{D}}^*=D^*T^*$,  where $T^*\colon\Wc^{{\widetilde{D}}^*}_0\to\Wc^{D^*}_0$ is a linear isomorphism. 
	Then, by Lemma~\ref{Ladis}, 
	there exists a homeomorphism $\Phi\colon\Vc^*\to\Vc^*$ satisfying 
	$\Phi\circ\ee^{tD^*}=\ee^{t{\widetilde{D}}^*}\circ\Phi$ for every $t\in\RR$. 
	The map 
	$$\varphi\colon\Cc_0(\Vc^*)\to\Cc_0(\Vc^*),\quad \phi(f):=f\circ\Phi$$
	is an equivariant isomorphism between the $C^*$-dynamical systems $(\Cc_0(\Vc^*),\RR,\alpha^{D})$ and $(\Cc_0(\Vc^*),\RR,\alpha^{{\widetilde{D}}})$, 
	where $\alpha^{D}\colon\RR\times\Cc_0(\Vc^*)\to\Cc_0(\Vc^*)$, 
	$\alpha^{D}(t,f):=f\circ\ee^{tD^*}$, 
	and $\alpha^{{\widetilde{D}}}$ is similarly defined with $D$ replaced by $\widetilde{D}$.
	 
	We now obtain a $*$-isomorphism  
	$$\id\ltimes\varphi\colon
	\RR\ltimes_{\alpha^{D}}\Cc_0(\Vc^*)\to
	\RR\ltimes_{\alpha^{{\widetilde{D}}}}\Cc_0(\Vc^*)$$ 
	by \cite[Lemma 2.65]{Wi07}. 
	We now recall from \cite[Ex. 3.16]{Wi07} that there are $*$-isomorphisms 
	$C^*(G_D)\simeq \RR\ltimes_{\alpha^{D}}\Cc_0(\Vc^*)$ 
	and 
	$C^*(G_{\widetilde{D}})\simeq \RR\ltimes_{\alpha^{{\widetilde{D}}}}\Cc_0(\Vc^*)$, 
	and we are done. 
\end{proof}

\begin{remark}
\normalfont
Theorem~\ref{isom-dir} is equivalent to the following fact: 
If $D_1,D_2\in \End(\Vc)$ with $n^{D_1}_{\pm}=n^{D_2}_{\pm}$ and $T D_1=D_2T$ for a suitable linear isomorphism $T\colon\Vc^{D_1}_0\to\Vc^{D_2}_0$, 
then the $C^*$-algebras $C^*(G_{D_1})$ and $C^*(G_{D_2})$ are $*$-isomorphic. 

We point out that a related idea was suggested in \cite{Ro76}.
\end{remark}

We now turn to establishing a converse to Theorem~\ref{isom-dir}. 
The following lemma replaces the method of coadjoint orbits for the particular class of solvable Lie groups we are interested in here.

\begin{lemma}\label{isom-conv-0}
	Let $D\in\End(\Vc)$. 
	If $\Vc^D_0=\Ker D$, then $G_D$ is an exponential solvable Lie  group and its unitary dual is homeomorphic to the quotient space 
	$(\Vc^*\times\RR)/\sim$, where 
	we define an equivalence relation $\sim$ on $\Vc^*\times\RR$ 
	whose equivalence classes are given by 
	$$[(\xi,s)]=\begin{cases}
	\{(\xi,s)\} & \text{ if }\xi\in(\Vc^D_{-}\oplus\Vc^D_{+})^\perp\simeq(\Vc^D_0)^*,\\
	\{\ee^{tD^*}\xi\mid t\in\RR\}\times\RR & \text{ if }\xi\in\Vc^*\setminus(\Vc^D_{-}\oplus\Vc^D_{+})^\perp
	\end{cases}$$ 
	then the unitary dual space of $G_D$ is homeomorphic to~$(\Vc^*\times\RR)/\sim$. 
\end{lemma}

\begin{proof}
	The hypothesis $\Vc^D_0=\Ker D$ implies that $\sigma(D)\cap\ie\RR\subseteq\{0\}$, hence $G_D$ is an exponential solvable Lie group by an application of \cite[Def. 5.2.11 and Th. 5.2.16]{FuLu15}. 
	In particular, $C^*(G_D)$ is type~I. 
	Using \cite[Th.~6.2 and Th.~8.43]{Wi07} 
	it then follows that the orbits 
	of $\alpha^D\colon \RR\times\Vc^*\to\Vc^*$ are locally closed subsets 
	of~$\Vc^*$. 
	Moreover, for any $\xi\in\Vc^*$ its stability group $H_\xi$ with respect to the group action $\alpha^D$ is a closed connected subgroup of $H:=(\RR,+)$ by \cite[Th. 5.3.2 and Prop. 5.2.13]{FuLu15}, hence that stability group is equal to 
	$\RR$ if $\xi\in(\Vc^D_{-}\oplus\Vc^D_{+})^\perp\simeq(\Vc^D_0)^*$ and is equal to $\{0\}$ if $\xi\in\Vc^*\setminus(\Vc^D_{-}\oplus\Vc^D_{+})^\perp$. 
	Here we have used again the hypothesis  $\Vc^D_0=\Ker D$. 
	We then obtain 
	$$H_\xi^\perp=
	\begin{cases}
	\{0\} & \text{ if } \xi\in(\Vc^D_{-}\oplus\Vc^D_{+})^\perp\\
	\RR  & \text{ if } \xi\in\Vc^*\setminus(\Vc^D_{-}\oplus\Vc^D_{+})^\perp.
	\end{cases}
	$$
	It then follows  by \cite[Th.~8.3.9]{Wi07} that $(\Vc^*\times\RR)/\sim$ is 
	homeomorphic to the primitive ideal space of $\RR \ltimes_{\alpha^{D}}\Cc_0(\Vc^*)$, 
	which is further homeomorphic to 
	the unitary dual space of $G_D$, and this completes the proof. 
\end{proof}

\begin{lemma}\label{traj}
	Let  $D\in\End(\Vc)$ and denote $\Vc_\epsilon:=\Vc^D_\epsilon$ 
	for $\epsilon\in\{-,0,+\}$. 
	There exist a norm $\Vert\cdot\Vert$ on $\Vc$ and a constant $a>0$ with the properties:
	\begin{enumerate}[{\rm(i)}]
		\item\label{traj_item1} 
		If $v\in\Vc_{-}$ then $\Vert \ee^{tD} v\Vert\le \ee^{-at}\Vert v\Vert$ and $\Vert \ee^{-tD} v\Vert\ge \ee^{at}\Vert v\Vert$ for all $t\ge0$. 
		\item\label{traj_item2} 
		If $v\in\Vc_{+}$ then $\Vert \ee^{tD} v\Vert\ge \ee^{at}\Vert v\Vert $ and $\Vert \ee^{-tD} v\Vert\le \ee^{-at}\Vert v\Vert$ for all $t\ge0$. 
		\item\label{traj_item3} 
		For every $v\in\Vc_{-}\setminus\{0\}$ 
		(respectively, $v\in\Vc_{+}\setminus\{0\}$) the function $\RR\to(0,\infty)$, 
		$t\mapsto\Vert \ee^{tD}v\Vert$ is strictly decreasing 
		(respectively, increasing) and bijective. 
	\end{enumerate}
\end{lemma}

\begin{proof}
	Using \cite[Prop. 2.2.7 and proof of Prop. 2.2.8]{CW14} 
	we first define $\Vert\cdot\Vert$ on $\Vc_{-}$ and on $\Vc_{+}$ satisfying the conditions in the statement. 
	Then we define $\Vert\cdot\Vert$on $\Vc_0$ as an arbitrary norm, 
	and finally we define 
	$\Vert v_{-}+v_0+v_{+}\Vert:=\max(\Vert v_{-}\Vert,\Vert v_0\Vert,\Vert v_{+}\Vert)$ for any $v_{\pm}\in\Vc_{\pm}$ and $v_0\in\Vc_0$, using the direct sum decomposition $\Vc=\Vc_{-}\oplus\Vc_0\oplus\Vc_{+}$. 
	This completes the proof of Assertions~\eqref{traj_item1}--\eqref{traj_item2}. 
	
	For~\eqref{traj_item3} we note that if $v\in\Vc_{-}\setminus\{0\}$ then by \eqref{traj_item1} the function $h_v\colon\RR\to[0,\infty)$, 
	$t\mapsto\Vert \ee^{tD}v\Vert$, is strictly decreasing 
	and $\lim\limits_{t\to\infty}h_t(v)=0$ while  $\lim\limits_{t\to 0}h_t(v)=\infty$. 
	On the other hand, if $v\in\Vc_{+}\setminus\{0\}$ then by \eqref{traj_item2} the function $h_v$ is strictly increasing 
	and $\lim\limits_{t\to\infty}h_t(v)=\infty$ while  $\lim\limits_{t\to 0}h_t(v)=0$. 
\end{proof}

\begin{lemma}\label{stratif}
	Let $D\in \End(\Vc)$ and denote $\Vc_\epsilon:=\Vc^D_\epsilon$ 
	for $\epsilon\in\{-,0,+\}$. 
	
	Assume $\Vc_0=\Ker D$ and 
	let $q\colon\Vc\to\Vc/\alpha_D$ denote the quotient map associated to the group action~$\alpha_D$. 
	We also define 
	$$A:=\Vc\setminus((\Vc_{-}+\Vc_0)\cup(\Vc_{+}+\Vc_0)) 
	\text{ and }A_{\pm}:=(\Vc_\pm+\Vc_0)\setminus\Vc_0.$$ 
	Then the following assertions hold: 
	\begin{enumerate}[{\rm(i)}]
		\item\label{stratif_item1} 
		One has the partition 
		$\Vc=A\sqcup A_{-}\sqcup A_{+}\sqcup\Vc_0$ into invariant subsets with respect to the group action $\alpha_D$.
		\item\label{stratif_item2} 
		If $A\ne\emptyset$, then $q(A)$ is open, dense and Hausdorff in $\Vc/\alpha_D$. 
		\item\label{stratif_item3} 
		The sets $q(A_+)$ and $q(A_{-})$ are closed subsets of  $(\Vc/\alpha_D)\setminus q(\Vc_0)$. 
		\item\label{stratif_item4} 
		The set $q(A_\pm)$ is homeomorphic to the Cartesian product of $\Vc_0$ and unit sphere of $\Vc_\pm$. 
		\item\label{stratif_item5} If $\Vc_\pm=\Ker(D\mp\1)$, then the set  $q(A)$ is the set of all separated points of~$\Vc/\alpha_D$.
	\end{enumerate}
\end{lemma}

\begin{proof}
	Assertion~\eqref{stratif_item1} is clear, 
	using the direct sum decomposition $\Vc=\Vc_{-}\oplus\Vc_0\oplus\Vc_{+}$, 
	in which all summands are invariant to the group action~$\alpha$. 
	
	For \eqref{stratif_item2}--\eqref{stratif_item3} we recall that the quotient map 
	$q$ is continuous and open, while $A$ is open and dense in $\Vc$ if $A\ne\emptyset$. 
	Hence $q(A)$ is open and dense in $\Vc/\alpha_D$. 
	Similarly, we note that $A_\pm$ are disjoint open subsets of $\Vc\setminus(A\sqcup\Vc_0)$, hence 
	$q(A_+)$ and $q(A_{-})$ are closed subsets of  $(\Vc/\alpha_D)\setminus q(\Vc_0)$. 
	
	It remains to prove that $q(A)$ is Hausdorff and that 
	\eqref{stratif_item4} and \eqref{stratif_item5} hold true. 
	To this end we note that   
	$$A=\{v_{-}+v_0+v_{+}\in \Vc_{-}\oplus\Vc_0\oplus\Vc_{+}\mid v_{-}\ne0\ne v_{+}\}$$
	and 
	$$A_\pm=\{v_\pm+v_0\in \Vc_\pm\oplus\Vc_0\mid v_\pm\ne0\}.$$
	We endow $\Vc$ with the norm given by Lemma~\ref{traj} 
	and we define 
	$$\Delta:=\{v_{-}+v_0+v_{+}\in\Vc_{-}\oplus\Vc_0\oplus\Vc_{+}
	\mid \Vert v_{-}\Vert=\Vert v_{+}\Vert\ne0\}$$
	and 
	$$\Delta_\pm=\{v_\pm+v_0\in \Vc_\pm\oplus\Vc_0\mid \Vert v_\pm\Vert=1\}.$$
	Then the maps 
	$$\Psi:=\alpha\vert_{\RR\times\Delta}\colon \RR\times\Delta\to A
	\text{ and }
	\Psi_\pm:=\alpha\vert_{\RR\times\Delta_\pm}\colon \RR\times\Delta_\pm\to A_\pm$$
	are homeomorphisms as a direct consequence of Lemma~\ref{traj}\eqref{traj_item3}. 
	One thus obtains the homeomorphisms $q\vert_\Delta\colon\Delta\to q(A)$  and $q\vert_{\Delta_\pm}\colon\Delta_\pm\to q(A_\pm)$, 
	which entail~\eqref{stratif_item4} and the fact that the open set $q(A)\subseteq\Vc/\alpha_D$ is Hausdorff in its relative topology. 
	Therefore every point of $q(A)$ is separated in $\Vc/\alpha_D$. 
	
	It remains to check that no point in $(\Vc/\alpha_D)\setminus q(A)$ is separated in $\Vc/\alpha_D$. 
	One has $\Vc/\alpha_D=q(A)\sqcup q(A_{-})\sqcup q(A_{+})\sqcup q(\Vc_0)$. 
	 To this end we note that, for arbitrary $v_{\pm}\in\Delta_\pm\cap\Vc_\pm$ and $v_0\in\Vc_0$,  
	the sequence 
	$$q(\ee^{-j}v_{-}+v_0+v_{+})
	=\{\ee^{-j}\ee^{-t}v_{-}+v_0+\ee^tv_{+}\mid t\in\RR\} $$
	contains in its limit set both $v_{-}+v_0$ (for $t_j=-j$) 
	and $v_0+v_{+}$ (for $t_j=0$), hence these points cannot be separated from each other by disjoint open neighborhoods. 
	This completes the proof. 
\end{proof}

\begin{theorem}\label{isom-conv}
	Let  $D_1,D_2\in \End(\Vc)$ for which 
	the $C^*$-algebras $C^*(G_{D_1})$ and $C^*(G_{D_2})$ are $*$-isomorphic and $\Vc^{D_j}_0=\Ker D_j$ for $j=1,2$. 
	Then 
	$\dim\Vc^{D_1}_0=\dim\Vc^{D_2}_0$ and 
	$\{n^{D_1}_\epsilon\mid\epsilon=\pm\}=\{n^{D_2}_\epsilon\mid\epsilon=\pm\}$. 
\end{theorem}

\begin{proof}
	The solvable Lie group $G_{D_j}$ is exponential by Lemma~\ref{isom-conv-0}.  
	It then follows by \cite[Th. 3.5]{BB16a} that the real rank of $C^*(G_{D_j})$ can be computed in the following way: 
	$$\begin{aligned}
	\RRa(C^*(G_{D_j}))
	&=\dim(\gg_{D_j}/[\gg_{D_j},\gg_{D_j}])
	=\dim\gg_{D_j}-\dim(\Ran D_j) \\
	& =1+\dim\Vc-\dim(\Ran D_j)
	=1+\dim(\Ker D_j).
	\end{aligned}$$ 
	Since $C^*(G_{D_1})\simeq C^*(G_{D_2})$, we obtain $\dim(\Ker D_1)=\dim(\Ker D_2)$, hence by hypothesis $\dim\Vc^{D_1}_0=\dim\Vc^{D_2}_0$. 
	
	It remains to prove that $n^1_\pm=n^2_\pm$, where 
	$n^j_\pm:=n^{D_j}_\pm$. 
	By Theorem~\ref{isom-dir}, we may assume $\Vc^{D_j}_\pm=\Ker(D_j\mp\1)$, that is, one has 
	$$D_j=
	\begin{pmatrix}
	-\1_{n^j_{-}} & 0 & 0 \\
	0 & 0 & 0 \\
	0 & 0 & \1_{n^j_{+}}
	\end{pmatrix}$$
	with respect to the direct sum decomposition $\Vc=\Vc^{D_j}_{-}\oplus\Vc^{D_j}_0\oplus\Vc^{D_j}_{+}$, 
	where $\1_{n^j_\pm}$ is the identity map of $\Vc^{D_j}_\pm$ 
	for $j=1,2$.	
	
	Then we use the fact that the unitary dual spaces of $G_{D_1}$ and $G_{D_2}$ are homeomorphic, 
	and by Lemmas \ref{isom-conv-0} and \ref{stratif} one obtains $\{n^1_\epsilon\mid\epsilon=\pm\}=\{n^2_\epsilon\mid\epsilon=\pm\}$ by a direct application of an argument in the proof of \cite[Prop. 2.1]{LiLu13}. 
\end{proof}

\begin{remark}
	\normalfont
	Theorems~\ref{isom-dir} 
	and \ref{isom-conv} 
	are generalizations of \cite[Prop. 2.4]{LiLu13}, 
	and of \cite[Prop. 2.1]{LiLu13}, respectively, 
	where one additionally assumed that $D_j\in\End(\Vc)$ is semisimple (which is stronger than our assumption  $\Vc^{D_j}_0=\Ker D_j$) for $j=1,2$.  
\end{remark}

\subsection{Quasidiagonality of generalized $ax+b$-groups}\label{sect7}

The main results of this section are Theorem~\ref{ax+b} and its corollaries, and their proofs require some basic notions on topological dynamical systems, which we now recall. 

Let $X\times\RR\to\RR$, $(x,t)\mapsto x\cdot t$, be a continuous right action of the group $(\RR,+)$ on a compact space~$X$. 
For every subset $Y\subseteq X$ we define the closed subsets of $X$ 
$$\begin{aligned}
\omega(Y)
&:=\bigcap\limits_{t\in\RR}\Cl(Y\cdot[t,\infty)) \\
\omega^*(Y)
&:=\bigcap\limits_{t\in\RR}\Cl(Y\cdot(-\infty,t]))
\end{aligned}$$
where $\Cl(\bullet)$ stands for the closure of a subset of~$X$. 
Equivalently
$$\omega(Y)=\{x\in X\mid(\exists \{y_n\}_{n\in\NN}\subseteq Y,\,\{t_n\}_{n\in\NN}\subseteq\RR)\ 
\lim\limits_{n\to\infty} t_n=\infty,\ 
\lim\limits_{n\to\infty}y_n\cdot t_n=x\}  $$
and there is a similar description of $\omega^*(Y)$ in which $\lim\limits_{n\to\infty} t_n=-\infty$.

For every singleton set $\{x\}\subseteq X$ we denote $\omega(\{x\})=:\omega(x)$ and $\omega^*(\{x\})=:\omega^*(x)$. 

A subset $A\subseteq T$ is said to be an \emph{attractor},  
respectively \emph{repeller}, 
if it has a neighborhood $N$ with $\omega(N)=A$, respectively $\omega^*(N)=A$. 
It is clear that attractors and repellers are closed invariant subsets of $X$. 

If $A\subseteq X$ is an attractor, then we define 
its corresponding \emph{complementary repeller} 
$$A^*:=\{x\in X\mid\omega(x)\cap A=\emptyset\}=X\setminus\widetilde{\alpha}(A)$$
(see \cite[Lemma 8.2.5]{CW14})
where the set 
$\widetilde{\alpha}(A):=\{x\in X\mid\omega(x)\cap A\ne\emptyset\}$ 
is called the \emph{domain of influence} of~$A$. 
The pair of sets $(A,A^*)$ is said to be a \emph{nontrivial  attractor-repeller pair} if $A\cup A^*\subsetneqq X$. 
(We point out that an attractor-repeller pair $(A,A^*)$ was called trivial in \cite[page 161]{CW14} if either $A=X$ and $A^*=\emptyset$, or $A=\emptyset$ and $A=X$, which leads to a different notion of nontrivial attractor-repeller pair. 
If however $X$ is connected then, using the fact that both $A$ and $A^*$ are closed, it follows that our notion of nontriviality agrees with the terminology of \cite{CW14}. 
Incidentally, in our applications of these notions in Lemma~\ref{img} and Proposition~\ref{attr} below, the space $X$ is the one-point compactification of a finite-dimensional real vector space hence is connected.)

\begin{notation}\label{compactification}
	\normalfont 
		For any locally compact space $S$ we denote by $\overline{S}:=S\sqcup\{\infty\}$ its one-point compactification if $S$ is non-compact, 
	and for every homeomorphism $\theta\colon S\to S$ we denote again by $\theta\colon \overline{S}\to\overline{S}$ 
	its extension to a homeomorphism with $\theta(\infty)=\infty$. 
	If $S$ is compact, then we denote  $\overline{S}:=S$ 
   and $\Cc_0(S):=\Cc(S)$. 
\end{notation}

\begin{lemma}\label{crossunit}
	Let $\Mc_0$ be a separable $C^*$-algebra without unit, with its unitization $\Mc:=\Mc_0\oplus\CC\1$. 
	Assume that $G$ is an amenable separable locally compact group for which $C^*(G)$ is quasidiagonal. 
	If one has a continuous action 
	$G\times\Mc_0\to\Mc_0$, canonically extended to an action $G\times\Mc\to\Mc$,  
	then $G\ltimes\Mc_0$ is quasidiagonal if and only if $G\ltimes\Mc$ is quasidiagonal. 
\end{lemma}

\begin{proof} 
	One has the split exact sequence $0\to\Mc_0\to\Mc\to\CC\1\to 0$ 
	which, by \cite[Lemma~2.82]{Ph87}, leads to the split exact sequence 
	$$0\to G\ltimes \Mc_0\to G\ltimes\Mc\to C^*(G)\to 0.$$ 
	If $G\ltimes\Mc$ is quasidiagonal then also its ideal $G\ltimes\Mc_0$ is quasidiagonal. 
	
	Conversely, let us assume that $G\ltimes\Mc_0$ is quasidiagonal. 
	The $C^*$-algebra $G\ltimes \Mc_0$ is separable, hence is $\sigma$-unital. 
	Moreover, in the above split exact sequence, $C^*(G)$ is a separable nuclear $C^*$-algebra since 
	$G$ is an amenable separable locally compact group (see \cite[p.~393]{Pe79}). 
	Since $C^*(G)$ is quasidiagonal, 
	it then follows by \cite[Prop. 2.5]{BrDa04} that $G\ltimes\Mc$ is quasidiagonal, and this completes the proof. 
\end{proof}

\begin{proposition}\label{comp}
	Let $\alpha\colon S\times\RR\to\RR$, $(x,t)\mapsto x\cdot t$, be a continuous right action of the group $(\RR,+)$ on a separable, locally compact, metrizable space $S$. 
	We extend this action to $\alpha\colon \overline{S}\times\RR\to\overline{S}$ as above. 
	Then the following assertions are equivalent: 
	\begin{enumerate}[\rm(i)]
		\item\label{comp_item1} The flow $\alpha$ on $\overline{S}$ has no nontrivial  attractor-repeller pair. 
		\item\label{comp_item2} The $C^*$-algebra $\RR\ltimes_{\alpha}\Cc_0(S)$ is quasidiagonal. 
		\item\label{comp_item3} There exists an embedding of $\RR\ltimes_{\alpha}\Cc_0(S)$ into an AF-algebra. 
	\end{enumerate}
\end{proposition}

\begin{proof}
Since the locally compact space $S$ is metrizable and separable, 
the compact space $\overline{S}$ is also metrizable. 
On the other hand, let $\Rc:=\bigcap\limits_A(A\sqcup A^*)$, where the intersection is taken over 
all attractors $A$ 
of the flow $\alpha$ on~$\overline{S}$. 
It follows by \cite[Def. 3.1.7 and Th. 8.3.3]{CW14} that the flow $\alpha$ on $\overline{S}$ is chain recurrent 
if and only if $\Rc=\overline{S}$. 
By the above definition of $\Rc$, the equality  
 $\Rc=\overline{S}$ holds if and only if 
for every attractor $A$ 
of~$\overline{S}$ one has $A\sqcup A^*=\overline{S}$, 
which by our definition prior to Notation~\ref{compactification} 
means that the attractor-repeller pair $(A,A^*)$ is trivial.  
Thus, the flow $\overline{S}$ has no nontrivial  attractor-repeller 
pair if and only if it is chain-recurrent. 
Therefore, by \cite[Th. 4.7]{Pi99}, 
the following conditions are equivalent: 
\begin{enumerate}
	\item\label{comp_item11} The flow $\alpha$ on $\overline{S}$ has no nontrivial  attractor-repeller pair. 
	\item\label{comp_item22} The $C^*$-algebra $\RR\ltimes_{\alpha}\Cc(\overline{S})$ is quasidiagonal. 
	\item\label{comp_item33} There exists an embedding of $\RR\ltimes_{\alpha}\Cc(\overline{S})$ into an AF-algebra. 
\end{enumerate}
By Lemma~\ref{crossunit}, one has \eqref{comp_item2}$\Leftrightarrow$\eqref{comp_item22}. 
Since $\RR\ltimes_{\alpha}\Cc(S)$ is an ideal of 
$\RR\ltimes_{\alpha}\Cc(\overline{S})$, 
we obtain \eqref{comp_item33}$\Rightarrow$\eqref{comp_item3}, 
and moreover it is well known that every AF-embeddable $C^*$-algebra is quasidiagonal, 
hence \eqref{comp_item3}$\Rightarrow$\eqref{comp_item2}. 
This completes the proof that actually 
\eqref{comp_item3}$\Leftrightarrow$\eqref{comp_item2}$\Leftrightarrow$\eqref{comp_item1}=\eqref{comp_item11}$\Leftrightarrow$\eqref{comp_item22}$\Leftrightarrow$\eqref{comp_item33}. 
\end{proof}

\begin{remark}
	\normalfont
	Here we collect a few basic facts to be used in the proof of Lemma~\ref{img} and  Proposition~\ref{attr} below.
	For more details, see \cite{CW14}. 
	
	Let $\Vc$ be a finite-dimensional real vector space and $D\in \End(\Vc)$ and let $E(\mu_k):=E^D(\mu_k)$, $k=1, \dots, r$ be the real generalized  eigenspaces for $D$, where  $\mu_k$, $k=1, \dots, r$,  are the eigenvalues of~$\sigma(D)$,

We denote by   $\lambda_1>\cdots>\lambda_\ell$ the distinct real parts of the eigenvalues $\mu_k$, $k=1, \dots, r$,  of~$\sigma(D)$, and define the Lyapunov space 
$\Vc(\lambda_j)$ to be the direct sum  of all real generalized eigenspaces $E(\mu_k)$  associated to eigenvalues $\mu_k$  with $\Re\mu_k=\lambda_j$.
Then one has  the direct sum decomposition 
\begin{equation}\label{attr_proof_eq0}
\Vc=\Vc(\lambda_1)\oplus\cdots\oplus\Vc(\lambda_\ell)
\end{equation} 
(see \cite[page 13]{CW14}). 
 One can prove   that
	\begin{equation}\label{attr_proof_eq-1}
	\Vc(\lambda_j)=\{0\}\cup\Bigl\{v\in\Vc\mid  \lim_{t\to\pm\infty}\frac{1}{t}\log\Vert\exp(tD)v\Vert=\lambda_j\Bigr\}
	\end{equation} 
	(see \cite[Th. 1.4.3]{CW14}). 
	 With  
	$\Vc_j:= \Vc(\lambda_j)\oplus\cdots\oplus\Vc(\lambda_\ell)$ 
	and $\Wc_j:=\Vc(\lambda_1)\oplus\cdots\oplus\Vc(\lambda_j)$, 
	it follows by \cite[Th. 1.4.4]{CW14} that if $v\in\Vc\setminus\{0\}$,  
	then 
	\begin{equation}\label{attr_proof_eq1}
	\lim_{t\to\infty}\frac{1}{t}\log\Vert\exp(tD)v\Vert=\lambda_j\iff v\in\Vc_j\setminus\Vc_{j+1}
	\end{equation}
	and 
	\begin{equation}\label{attr_proof_eq2}
	\lim_{t\to-\infty}\frac{1}{-t}\log\Vert\exp(tD)v\Vert=-\lambda_j\iff v\in\Wc_j\setminus\Wc_{j-1}, 
	\end{equation}
	where $\Vc_{\ell+1}=\Wc_0:=\{0\}$. 
	
	By \eqref{attr_proof_eq-1}, 
	one obtains 
	\begin{equation}\label{attr_proof_eq3}
	v\in\Vc(\lambda_j),\ \pm\lambda_j> 0
	\implies
	\lim\limits_{t\to\pm\infty}\exp(tD)v=\infty  
	\text{ and }\lim\limits_{t\to\mp\infty}\exp(tD)v=0. 
	\end{equation}
	It follows by
	\eqref{attr_proof_eq1}--\eqref{attr_proof_eq2}
	that
	\begin{equation}\label{attr_proof_eq5}
	v\in\Vc_j\setminus\Vc_{j+1},\ \lambda_j> 0
	\implies
	\lim\limits_{t\to\infty}\exp(tD)v=\infty 
	\end{equation}
	and 
	\begin{equation}\label{attr_proof_eq6}
	v\in\Wc_j\setminus\Wc_{j-1},\ \lambda_j< 0
	\implies
	\lim\limits_{t\to-\infty}\exp(tD)v=\infty. 
	\end{equation}
\end{remark}

\begin{lemma}\label{img}
	If $\Vc$ is a finite-dimensional real vector space 
	and $D\in \End(\Vc)$ whose  corresponding flow 
	$\alpha_D\colon \overline{\Vc}\times\RR\to\overline{\Vc}$,  $\alpha_D(v,t):=\exp(tD)v$, 
	has a nontrivial attractor-repeller pair, then there exists no $\tau\in\RR\setminus\{0\}$ with $\ie\tau\in\sigma(D)$. 
\end{lemma}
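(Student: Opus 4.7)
The plan is to prove the contrapositive: assuming $\ie\tau\in\sigma(D)$ for some $\tau\in\RR\setminus\{0\}$, I will show that $\alpha_D$ on $\overline{\Vc}$ admits no nontrivial attractor-repeller pair. From a complex eigenvector $v=v_1+\ie v_2\in\Vc_\CC$ of $\ie\tau$, one extracts a $2$-dimensional real $D$-invariant subspace $W:=\spa_{\RR}\{v_1,v_2\}\subseteq\Vc(0)$ on which $D$ acts as a pure rotation with nonzero angular velocity $\tau$; the vectors $v_1,v_2$ are $\RR$-linearly independent and $W\subseteq\Vc(0)$ because $D$ is real and $\tau\neq 0$. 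Every $w\in W\setminus\{0\}$ therefore lies on a closed periodic orbit of period $2\pi/|\tau|$ that is a compact subset of $\Vc\setminus\{0\}$.

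Suppose, toward contradiction, that $(A,A^*)$ is a nontrivial attractor-repeller pair, so the heteroclinic set $H:=\overline{\Vc}\setminus(A\cup A^*)$ is nonempty. By Conley's fundamental theorem (cf.\ \cite{CW14}), there is a continuous Lyapunov function $L\colon\overline{\Vc}\to[0,1]$ with $L^{-1}(0)=A$, $L^{-1}(1)=A^*$, and $L$ strictly decreasing on every orbit contained in $H$. Since $L$ must be constant on every closed orbit, each periodic orbit in $W\setminus\{0\}$ lies in $A\cup A^*$. The family of these orbits is parametrized by the connected set $(0,\infty)$, and the traces $A\cap W$, $A^*\cap W$ are closed and disjoint, so connectedness forces one of them to exhaust $W\setminus\{0\}$. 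Without loss of generality, $W\setminus\{0\}\subseteq A$; since $W$ is unbounded in $\Vc$, taking the closure in $\overline{\Vc}$ gives $W\cup\{0,\infty\}\subseteq A$.

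Next, using the Lyapunov decomposition of Remark~\ref{rem1}, write $\Vc=\Vc_{-}\dotplus\Vc(0)\dotplus\Vc_{+}$, where $\Vc_{\pm}$ denotes the sum of the Lyapunov spaces with positive, resp.\ negative, exponents. Since $A^*$ is closed in $\overline{\Vc}$ and disjoint from $\{0,\infty\}\subseteq A$, it is compact in $\Vc$; by invariance, every orbit in $A^*$ is bounded in $\Vc$, and the characterizations \eqref{attr_proof_eq5}--\eqref{attr_proof_eq6} force every $x\in A^*$ to satisfy $x\in\Vc(0)$. Hence $A^*\subseteq\Vc(0)$. For $x\in H$, the Conley decomposition gives $\omega^*(x)\subseteq A^*\subseteq\Vc(0)$, so in particular the backward orbit of $x$ is bounded in $\Vc$; this forces the $\Vc_{-}$-component of $x$ to vanish. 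Applying the same argument to the time-reversed flow (for which $(A^*,A)$ is an attractor-repeller pair and the roles of $\Vc_{+}$ and $\Vc_{-}$ swap) yields the vanishing of the $\Vc_{+}$-component of $x$. Hence $H\subseteq\Vc(0)$.

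The final step is to rule out $x\in H\subseteq\Vc(0)$. If the orbit $t\mapsto x\cdot t$ is unbounded in $\Vc(0)$, then $\infty\in\omega^*(x)$, contradicting $\omega^*(x)\subseteq A^*\subseteq\Vc$. Otherwise the orbit is bounded, which (since $D|_{\Vc(0)}$ has all eigenvalues purely imaginary) means $x$ lies in the sum of the genuine eigenspaces of $D|_{\Vc(0)}$, and the orbit is quasi-periodic on a compact torus. For such an orbit, $\omega(x)=\omega^*(x)=\Cl\{x\cdot t:t\in\RR\}$, so this nonempty set would lie in $A\cap A^*=\emptyset$, an impossibility. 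Thus $H=\emptyset$, contradicting the nontriviality of $(A,A^*)$. The main obstacle will be the coordination of the Lyapunov decomposition with the attractor-repeller structure, culminating in the bounded-versus-unbounded dichotomy for orbits in $\Vc(0)$ where the identity $\omega(x)=\omega^*(x)$ for quasi-periodic orbits is the decisive ingredient.
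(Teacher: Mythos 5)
There is a genuine gap at the step where you claim that every $x\in H$ has vanishing $\Vc_{+}$-component ``by applying the same argument to the time-reversed flow''. Your argument for the vanishing of the $\Vc_{-}$-component rests on the fact that the repeller $A^*$ is disjoint from $\{0,\infty\}\subseteq A$, hence is a compact invariant subset of $\Vc$; this is what makes $\omega^*(x)\subseteq A^*$ bounded and forces the backward orbit of $x$ to stay bounded. For the time-reversed flow the attractor-repeller pair is $(A^*,A)$, so the relevant repeller is now $A$ --- and $A$ contains $0$, $\infty$ and all of $W$, precisely because of your (correct) normalization $W\setminus\{0\}\subseteq A$. Hence $A$ is not compact in $\Vc$, the inclusion $\omega(x)\subseteq A$ yields no boundedness of the forward orbit (having $\infty\in\omega(x)$ is perfectly consistent with $\omega(x)\subseteq A$), and nothing about the $\Vc_{+}$-component follows. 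Concretely, a point $x=x_0+x_{+}$ with $x_{+}\ne 0$, $x_{-}=0$, and $x_0$ lying on a recurrent orbit inside $A^*$ satisfies $\omega(x)=\{\infty\}\subseteq A$ and $\omega^*(x)=\omega^*(x_0)\subseteq A^*$, so none of the facts you have established excludes such an $x$ from $H$. Since your final contradiction only treats points of $H$ lying in $\Vc(0)$, the proof is incomplete.

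The missing idea is the one the paper uses at the corresponding place: rather than trying to localize $H$ inside $\Vc(0)$, it first proves outright that $\sigma(D)\subseteq\ie\RR$, i.e.\ that there is no hyperbolic part at all, by combining $0,\infty\in A$ (which you also obtain from the circle of periodic orbits) with \eqref{attr_proof_eq3}: a nonzero vector in a Lyapunov space with nonzero exponent has $\omega$-limit $\{\infty\}$ and $\omega^*$-limit $\{0\}$ (or vice versa), and the paper derives from this a contradiction with $0$ and $\infty$ lying in the same member of the pair. Once $\Vc=\Vc(0)$ is known, your concluding dichotomy --- polynomial blow-up in both time directions when $D$ is not semisimple, versus $\omega(x)=\omega^*(x)$ for the recurrent orbits of a semisimple $D$ with purely imaginary spectrum --- is exactly the paper's Steps 2 and 3 and does go through (the paper proves the recurrence identity via compactness of $\SO(\Vc)$, which you assert but do not prove for quasi-periodic orbits). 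You should either import the paper's Step 1 or supply an independent argument that $H$ meets $\Vc(0)$, before your endgame can apply.
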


\begin{proof}
	Let $\overline{\Vc}=A\sqcup Z\sqcup A^*$ with $Z\ne\emptyset$, where $(A,A^*)$ is a nontrivial attractor-repeller pair, 
	and assume that there exists $\tau\in\RR\setminus\{0\}$ with  $\ie\tau\in\sigma(D)$. 
	We will argue by contradiction, in a few steps, 
	repeatedly using the fact that by \cite[Prop. 8.2.7]{CW14},
	\begin{equation}\label{disj}
	\omega(v)\subseteq A\text{ and } \omega^*(v)\subseteq A^*,   
	\text{ hence }\omega(v)\cap\omega^*(v)=\emptyset, 
	\text{ for all }v\in Z. 
	\end{equation}
	This implies that, since $0$ and $\infty$ are fixed points of the flow $\alpha_D$, we cannot have $0,\infty\in Z$ 
	hence $Z\subseteq\Vc\setminus\{0\}$. 
	
	\textit{Step 1:} We first prove that $\Vc=\Vc(0)$, that is, $\sigma(D)\subseteq\ie \RR$. 
	
	To this end we note that since $\ie\tau\in\sigma(D)$, there exist $v_1,v_2\in\Vc$ with $0\ne v_1+\ie v_2\in\Ker(D-\ie\tau)\subseteq\CC\otimes_{\RR}\Vc$. 
	Since $D(\Vc)\subseteq\Vc$, it is easy to check that $v_1,v_2\in\Vc\setminus\{0\}$ are linearly independent vectors. 
	It also follows that 
	$\exp(tD)v_1=(\cos(t\tau))v_1-(\sin(t\tau))v_2$ for every $t\in\RR$, 
	and, since $\tau\ne 0$, this directly implies 
	$\omega(v_1)=\omega^*(v_1)=\exp(\RR D)v_1\simeq\TT$.
	Then, by \eqref{disj}, one has $v_1\not\in Z$. 
	
	Therefore $v_1\in A\sqcup A^*$, and one similarly obtains 
	$sv\in A\sqcup A^*$ for every $s\in\RR$. 
	(If $s=0$ then $\omega(0)=\omega^*(0)=\{0\}$, so $0\not\in Z$ by the above argument.) 
	Thus $\RR v_1\subseteq A\sqcup A^*$, and since $\RR v_1$ is connected, we must have either $\RR v_1\subseteq A$ 
	or $\RR v_1\subseteq A^*$. 
	We may assume 
	$\RR v_1\subseteq A$. 
	Then, since $A$ is a closed subset of $\overline{\Vc}$, 
	we obtain  
	$\RR v_1\cup\{\infty\}\subseteq A$, hence $0,\infty\in A$. 
	On the other hand, if there exists $\lambda\in\sigma(D)$ with $\Re\lambda\ne 0$, 
	then \eqref{attr_proof_eq3} implies that we have either 
	$0\in A$ and $\infty\in A^*$, or $0\in A^*$ and $\infty\in A$, 
	which is a contradiction with $0,\infty\in A$.  
	Consequently, $\Re\lambda=0$ for every $\lambda\in\sigma(D)$, 
	that is, $\Vc=\Vc(0)$. 
	
	\textit{Step 2:} 
	We now prove that the map $D$ is semisimple. 
	
	To this end let us assume that $D$ is not semisimple. 
	We fix a real Jordan basis in $\Vc$ 
	(see for instance \cite[Th. 1.2.3]{CW14}), and then we define the Euclidean scalar product on $\Vc$ for which that Jordan basis is an orthonormal basis. 
	Let $D=S+N$ be the Jordan decomposition, where $S$ is semisimple, $N$ is nilpotent, and $SN=NS$. 
	The assumption that $D$ has only purely imaginary eigenvalues implies that for every $t\in\RR$ the map $\exp(tS)$ is an isometry, 
	while the assumption that $D$ is not semisimple implies that there exists an integer $k\ge 2$ with $N^k=0\ne N^{k-1}$. 
	Since $Z$ is an open nonempty subset of~$\Vc$ while $N^{k-1}\ne 0$, it follows that $Z\not\subset\Ker N^{k-1}$, 
	hence there exists $v\in Z$ with $N^{k-1}v\ne 0$. 
	Then it is easily checked that 
	$$\exp(tN)v=\sum\limits_{j=0}^{k-1}\frac{t^j}{j!}N^jv\to\infty\in\overline{\Vc}\text{ as }t\to\pm\infty$$
	hence 
	$$\lim_{t\to\pm\infty}\Vert \exp(tD)v\Vert
	=\lim_{t\to\pm\infty}\Vert \exp(tS)\exp(tN)v\Vert
	=\lim_{t\to\pm\infty}\Vert \exp(tN)v\Vert=\infty$$
	and this shows that $\infty\in\omega(v)\cap\omega^*(v)$, 
	which is a contradiction with \eqref{disj}. 
	Consequently, $D$ must be semisimple.  
	
	\textit{Step 3:} We now show that if $D\in\End(\Vc)$ is semisimple and $\sigma(D)\subseteq\ie\RR$, then 
	\begin{equation}\label{img_eq}
	(\forall v\in \Vc)\quad \omega(v)=\omega^*(v). 
	\end{equation} 
	For $v\in Z$, this is a contradiction with \eqref{disj}, 
	which completes the proof. 
	
	To prove \eqref{img_eq}, we will use a real Jordan basis and its corresponding Euclidean structure on $\Vc$ as in Step 2 above. 
	By the current assumption on $D$ we  obtain as above that $\exp(tD)\in\SO(\Vc)$ for every $t\in\RR$, where $\SO(\Vc)$ is the group of all isometries of $\Vc$ that can be joined by a continuous path to the identity operator~$\1$. 
	It is well known that $\SO(\Vc)$ is a compact group.  
	
	Now let $v\in\Vc$ and $x\in\omega(v)$ arbitrary. 
	Then there exists a sequence of positive real numbers $\{t_n\}_{n\ge 1}$ 
	with $\lim\limits_{n\to\infty}t_n=\infty$ and $\lim\limits_{n\to\infty}\exp(t_nD)v=x$. 
	Since $\exp(t_nD)\in\SO(\Vc)$ for all $n\ge 1$, one has $\Vert x\Vert=\Vert v\Vert<\infty$, hence $x\ne\infty$. 
	
	We choose an arbitrary sequence $\{s_n\}_{n\ge 1}$ in $(0,\infty)$ with $s_{n+1}-s_n\ge 2t_n$ for every $n\ge 1$. 
	Since $\SO(\Vc)$ is compact, there exists a sequence of positive integers $n_1<n_2<\cdots$  
	for which the sequence $\{\exp(s_{n_q} D)\}_{q\ge 1}$ is convergent. 
	In particular, for $r_q:=s_{n_{q+1}}-s_{n_q}\ge 2t_{n_q}$ we obtain 
	$$\lim_{q\to\infty}r_q=\infty\text{ and }
	\lim_{q\to\infty}\exp(r_qD)=\1.$$ 
	We also note that for all $q\ge 1$, 
	$$r_q=s_{n_{q+1}}-s_{n_q}
	= 
	\sum_{j=n_q}^{n_{q+1}-1}s_{j+1}-s_j
	\ge \sum_{j=n_q}^{n_{q+1}-1} 2t_j
	\ge 2t_{n_q}.$$
	For $a_q:=r_q-t_{n_q}\ge t_{n_q}$ we have $\lim\limits_{q\to\infty}a_q=\infty$ and, 
	by $\exp(-a_q D)\in\SO(\Vc)$, 
	$$\begin{aligned}
	\Vert\exp(-a_qD)v-x\Vert
	& \le \Vert\exp(-a_qD)v-\exp(t_{n_q} D)v\Vert
	+\Vert\exp(t_{n_q}D)v-x\Vert \\
	&=\Vert v-\exp((a_q+t_{n_q})D)v\Vert
	+\Vert\exp(t_{n_q}D)v-x\Vert \\
	&=\Vert v-\exp(r_qD)v\Vert
	+\Vert\exp(t_{n_q}D)v-x\Vert\to 0\text{ as }q\to\infty
	\end{aligned}$$
	hence $x\in\omega^*(v)$. 
	This shows that $\omega(v)\subseteq\omega^*(v)$. 
	The converse inclusion can be proved similarly, 
	hence \eqref{img_eq} is completely proved, and we are done. 
\end{proof}

\begin{proposition}\label{attr}
	Let $\Vc$ be a finite-dimensional real vector space. 
	If $D\in \End(\Vc)$ then its corresponding flow 
	$$\alpha_D\colon \overline{\Vc}\times\RR\to\overline{\Vc}, \quad \alpha_D(v,t):=\exp(tD)v$$ 
	has a nontrivial  attractor-repeller pair if and only if either $\Re z>0$ for every $z\in\sigma(D)$ 
	or $\Re z<0$ for every $z\in\sigma(D)$. 
	If this is the case, then the only nontrivial  attractor-repeller pair is $(\{\infty\},\{0\})$ or $(\{0\},\{\infty\})$, 
	respectively. 
\end{proposition}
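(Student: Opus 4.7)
The plan is to handle the two directions separately, with the uniqueness statement in the ``moreover'' part falling out of the ``only if'' analysis.

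For the ``if'' direction I would assume without loss of generality that every $z\in\sigma(D)$ has $\Re z>0$ (the case $\Re z<0$ is symmetric under $D\leftrightsquigarrow -D$). Since then $\exp(-tD)\to 0$ in operator norm as $t\to\infty$, the smallest singular value of $\exp(tD)$ tends to $\infty$, so the convergence $\exp(tD)v\to\infty$ in $\overline{\Vc}$ is uniform for $v\in\Vc$ with $\Vert v\Vert\ge R$. Taking $N:=\overline{\Vc}\setminus\{v\in\Vc:\Vert v\Vert\le R\}$ as a neighborhood of $\infty$, this uniformity gives $\omega(N)=\{\infty\}$, so $\{\infty\}$ is an attractor. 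Similarly $\exp(tD)v\to 0$ as $t\to-\infty$ for every $v\in\Vc$, so $\omega(v)=\{\infty\}$ for $v\in\Vc\setminus\{0\}$ and $\omega(0)=\{0\}$, yielding $\widetilde{\alpha}(\{\infty\})=\overline{\Vc}\setminus\{0\}$ and thus $\{\infty\}^*=\{0\}$, giving a nontrivial attractor-repeller pair.

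For the ``only if'' direction, assume a nontrivial attractor-repeller pair $(A,A^*)$, so $Z:=\overline{\Vc}\setminus(A\cup A^*)\ne\emptyset$. Lemma~\ref{img} gives $\sigma(D)\cap\ie(\RR\setminus\{0\})=\emptyset$, and the key mechanism throughout is the observation from the proof of Lemma~\ref{img} (see~\eqref{disj}) that $v\in Z$ forces $\omega(v)\subseteq A$ and $\omega^*(v)\subseteq A^*$, used in combination with the closedness and invariance of $A$ and $A^*$. I would first exclude $0\in\sigma(D)$: if $\Ker D\ne\{0\}$, then for $v_0\in\Ker D\setminus\{0\}$ the circle $\RR v_0\cup\{\infty\}$ in $\overline{\Vc}$ consists of fixed points, none of which can lie in $Z$ (since $\omega(v)=\omega^*(v)=\{v\}$ for a fixed $v$), so by connectedness the whole circle lies in one of $A,A^*$; WLOG $\{0,\infty\}\subseteq A$. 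Then for every $\lambda_j\ne 0$ and every $v\in\Vc(\lambda_j)\setminus\{0\}$, both $\omega(v)$ and $\omega^*(v)$ lie in $\{0,\infty\}\subseteq A$, and the $Z$/$A$/$A^*$ trichotomy (noting that $v\in A^*$ would pull $0$ or $\infty$ into $A^*$) forces $v\in A$; a similar treatment covers vectors in the generalized $0$-eigenspace with nontrivial Jordan structure, where $\omega(v)=\omega^*(v)=\{\infty\}$. Consequently $\Vc\subseteq A$, hence $A=\overline{\Vc}$ and $A^*=\emptyset$, contradicting nontriviality.

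Still in the ``only if'' direction, with $0\notin\sigma(D)$, I would rule out the case that $\sigma(D)$ meets both $\{\Re z>0\}$ and $\{\Re z<0\}$. Writing $\Vc=\Vc^+\oplus\Vc^-$ for the direct sum of Lyapunov subspaces with positive and negative real parts, \eqref{attr_proof_eq1}--\eqref{attr_proof_eq2} yield $\omega(v)=\omega^*(v)=\{\infty\}$ for every $v=v^++v^-$ with both summands nonzero, so again $v\notin Z$. Since one cannot have both $(\Vc^+\setminus\{0\})\cap Z$ and $(\Vc^-\setminus\{0\})\cap Z$ nonempty (otherwise $\infty\in A\cap A^*$), WLOG $\Vc^+\setminus\{0\}\subseteq A\cup A^*$, and connectedness of $\Vc^+$ forces $\Vc^+\subseteq A$; then $\{0,\infty\}\subseteq A$, and the same dichotomy funnels $\Vc^-$ and all mixed vectors into $A$, again yielding $A=\overline{\Vc}$. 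Uniqueness in the ``if'' case follows from the same kind of dichotomy: since every nonzero $v$ satisfies $\omega(v)=\{\infty\}$ and $\omega^*(v)=\{0\}$, the constraints force $\infty\in A$, $0\in A^*$, and $\Vc\setminus\{0\}\subseteq Z$, pinning down $A=\{\infty\}$ and $A^*=\{0\}$.

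The main obstacle will be the careful bookkeeping in the ``only if'' direction: one has to split the Lyapunov decomposition~\eqref{attr_proof_eq0} into the kernel of $D$, the nontrivial $0$-Jordan part, and the nonzero-real-part summands, and apply the $Z$/$A$/$A^*$ trichotomy to each in a manner consistent with the WLOG labeling choice made at the outset; once this is organized the contradictions are immediate from the closedness and invariance of $A$ and $A^*$.
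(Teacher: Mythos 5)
Your proposal is correct, and it reaches the conclusion by a route that is organized differently from the paper's. The shared core is the same: Lemma~\ref{img} to exclude nonzero purely imaginary spectrum, the Lyapunov decomposition with the limits \eqref{attr_proof_eq1}--\eqref{attr_proof_eq6}, and the fact \eqref{disj} that $v\in Z$ forces $\omega(v)\subseteq A$ and $\omega^*(v)\subseteq A^*$. The difference lies in how the contradiction is extracted in the ``only if'' direction: the paper uses that $Z$ is open and nonempty in $\Vc$ to pick two points $z_1\in Z\setminus\Vc_2$ and $z_2\in Z\setminus\Wc_{\ell-1}$, whence $\infty\in\omega(z_1)\cap\omega^*(z_2)\subseteq A\cap A^*$, an immediate contradiction; you instead show by connectedness and absorption that every point of $\overline{\Vc}$ is swallowed by one of $A$, $A^*$, so that $Z=\emptyset$. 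Your version costs more bookkeeping --- the mixed vectors $v=\sum_j v_j$ spread over several Lyapunov spaces, and the interaction of your two WLOG choices, all of which are harmless because the facts you invoke are symmetric under exchanging $(A,\omega)$ with $(A^*,\omega^*)$ --- but it buys two things the paper's write-up does not make explicit: first, a genuine verification that $\{\infty\}$ is an attractor in the ``if'' direction via the uniform lower bound on $\Vert\exp(tD)v\Vert$ over $\Vert v\Vert\ge R$ (the paper only computes $\omega(v)$ pointwise and asserts that this ``directly implies'' the conclusion, whereas $\omega(N)$ of a neighborhood can a priori exceed $\bigcup_{v\in N}\omega(v)$); and second, an explicit treatment of the case $0\in\sigma(D)$, which the paper's reduction to the inequality $\lambda_1>0>\lambda_\ell$ after Lemma~\ref{img} passes over (for instance $\sigma(D)=\{0,1\}$ is covered neither by Lemma~\ref{img} nor by $\lambda_\ell<0$) --- your fixed-point-circle argument on $\Ker D$ handles exactly this residual case.
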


\begin{proof}
	If $\Re z<0$ for every $z\in\sigma(D)$, then $0>\lambda_1>\cdots>\lambda_\ell$, 
	hence by \eqref{attr_proof_eq1} we easily obtain that for arbitrary $v\in\Vc$ 
	there exists $t_0>0$ such that 
	if $t\in(t_0,\infty)$, then $\frac{1}{t}\log\Vert\exp(tD)v\Vert<\lambda_1/2$, 
	that is, $\Vert\exp(tD)v\Vert<\exp(t\lambda_1/2)$, 
	and then 
	$\lim\limits_{t\to\infty}\exp(tD)v=0$. 
	This shows that $\omega(v)=\{0\}$ for every $v\in\Vc$, which directly implies 
	that there exists a unique nontrivial  attractor-repeller pair, namely $(\{0\},\{\infty\})$. 
	
	In the case when $\Re z>0$ for every $z\in\sigma(D)$, 
	we obtain just as above, using however \eqref{attr_proof_eq2} instead of \eqref{attr_proof_eq1}, 
	that for every $v\in\Vc\setminus\{0\}$ one has $\lim\limits_{t\to\infty}\exp(tD)v=\infty$, 
	hence $\omega(v)=\{\infty\}\subset\overline{\Vc}$, 
	and this implies that there exists a unique nontrivial attractor-repeller pair, namely $(\{\infty\},\{0\})$. 
	
	By Lemma~\ref{img}, it remains to show that if $\lambda_1> 0 >\lambda_\ell$, then 
	there exists no nontrivial attractor-repeller pair. 
	Reasoning by contradiction, let us assume that there exists an attractor $A$ with $Z:=\overline{\Vc}\setminus(A\sqcup A^*)\ne\emptyset$. 

The inequalities $\lambda_1> 0>\lambda_\ell$ imply in particular $\ell\ge 2$, 
	and then $\Vc_2\subsetneqq\Vc$ and $\Wc_{\ell-1}\subsetneqq\Vc$. 
	
	We note that $Z=\overline{\Vc}\setminus(A\sqcup A^*)$ is an open subset of $\overline{\Vc}$, 
	and $Z\ne\emptyset$ by assumption. 
	As noted at the beginning of the proof of Lemma~\ref{img}, one has $Z\subseteq\Vc\setminus \{0\}$. 
		Then $Z$ is an open nonempty subset of $\Vc$, hence one has neither $Z\subseteq\Vc_2$ nor 
	$Z\subseteq\Wc_{\ell-1}$. 
	That is, there exist $z_1\in Z\setminus\Vc_2$ and 
	$z_2\in Z \setminus\Wc_{\ell-1}$. 
	Then, using \eqref{attr_proof_eq5}, we obtain 
	$\{\infty\}\subseteq\omega(z_1)\subseteq\omega(Z)\subseteq A$, 
	and on the other hand, by \eqref{attr_proof_eq6} we obtain 
	$\{\infty\}\subseteq\omega^*(z_2)\subseteq\omega^*(Z)\subseteq A^*$, 
	hence $\infty\in A\cap A^*$, which is a contradiction with the fact that always $A\cap A^*=\emptyset$. 
	This completes the proof.
	 \end{proof}

\begin{theorem}\label{ax+b}
	Let $\Vc$ be a finite-dimensional real vector space,   
	fix $D\in \End(\Vc)$. 
	The following assertions are equivalent: 
	\begin{enumerate}[{\rm(i)}]
		\item\label{ax+b_spec} One has either $\Re z>0$ for every $z\in\sigma(D)$ 
		or $\Re z<0$ for every $z\in\sigma(D)$.  
		\item\label{ax+b_qsd} The $C^*$-algebra $C^*(G_D)$ is not quasidiagonal. 
		\item\label{ax+b_emb} There exists no embedding of $C^*(G_D)$ into any AF-algebra. 
		 \end{enumerate}
\end{theorem}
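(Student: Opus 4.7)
The plan is to identify $C^*(G_D)$ as a crossed product of a commutative $C^*$-algebra by $\RR$ via Pontryagin duality, and then chain together Lemma~\ref{comp}, Proposition~\ref{attr} and Proposition~\ref{crossunit} to translate the spectral condition on $D$ into the statements about quasidiagonality and AF-embeddability.

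First, since $\Vc$ is a closed normal abelian subgroup of $G_D = \RR \ltimes_D \Vc$, the standard semidirect-product isomorphism for group $C^*$-algebras gives
$$C^*(G_D) \simeq \RR \ltimes_{\hat\alpha} C^*(\Vc) \simeq \RR \ltimes_{\hat\alpha} \Cc_0(\Vc^*),$$
where $\Vc^*$ is the Pontryagin dual of $(\Vc,+)$ and the dual action $\hat\alpha$ on $\Vc^*$ is generated by $-D^T$. Since $\sigma(-D^T) = -\sigma(D)$, condition~\eqref{ax+b_spec} holds for $D$ if and only if it holds for $-D^T$, so it is enough to study the flow $\alpha_{-D^T}$ on $\Vc^*$, extended to the one-point compactification $\overline{\Vc^*}$ by fixing $\infty$. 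Next, applying Proposition~\ref{crossunit} with $\Mc_0 := \Cc_0(\Vc^*)$, $\Mc := \Cc(\overline{\Vc^*})$ and $G := \RR$ (using that $\RR$ is amenable and $C^*(\RR)$ is abelian, hence quasidiagonal) reduces the question to whether $\RR \ltimes \Cc(\overline{\Vc^*})$ is quasidiagonal. Lemma~\ref{comp} then identifies both quasidiagonality and AF-embeddability of $\RR \ltimes \Cc(\overline{\Vc^*})$ with the absence of a nontrivial attractor-repeller pair for $\alpha_{-D^T}$, and Proposition~\ref{attr} equates the existence of such a pair with \eqref{ax+b_spec}.

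Combining these steps yields \eqref{ax+b_spec} $\iff$ \eqref{ax+b_qsd}. For the AF-embedding equivalence, \eqref{ax+b_qsd} $\Rightarrow$ \eqref{ax+b_emb} is immediate since any $C^*$-subalgebra of an AF-algebra is quasidiagonal. Conversely, if \eqref{ax+b_spec} fails, then Lemma~\ref{comp} produces an embedding of $\RR \ltimes \Cc(\overline{\Vc^*})$ into an AF-algebra, which restricts to an embedding of the ideal $\RR \ltimes \Cc_0(\Vc^*) \simeq C^*(G_D)$, so \eqref{ax+b_emb} fails as well. Since the substantive analysis is already contained in Lemma~\ref{comp} and Proposition~\ref{attr}, the only points requiring care are the Fourier-dual identification giving the generator $-D^T$ of the action on $\Vc^*$ and the symmetry of the spectral condition under $D \mapsto -D^T$; both are routine.
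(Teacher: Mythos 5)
Your proposal is correct and follows essentially the same route as the paper: the identification $C^*(G_D)\simeq\RR\ltimes\Cc_0(\Vc^*)$ via the dual action, reduction to the unitization $\Cc(\overline{\Vc^*})$ by Proposition~\ref{crossunit}, the translation to attractor--repeller pairs via Lemma~\ref{comp} and Proposition~\ref{attr}, and the restriction-to-the-ideal argument for the AF-embeddability direction. Your remark that the spectral condition is invariant under $D\mapsto -D^T$ is exactly the point the paper makes (phrased there for $D^*$), so there is nothing to add.
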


\begin{proof} 
	We denote $G:=G_D$ for simplicity. 
	
	The implication $
	\eqref{ax+b_qsd}\implies\eqref{ax+b_emb}$ holds true with $C^*(G)$ replaced by any $C^*$-algebra. 
	
	For $\eqref{ax+b_spec}\iff\eqref{ax+b_qsd}$ we note that Assertion~\eqref{ax+b_spec} is equivalent to the similar property of the linear map $D^*\in\End(\Vc^*)$. 
	By Proposition~\ref{attr}, the Assertion~\eqref{ax+b_spec} is equivalent to existence of a nontrivial attractor-repeller pair 
	for the flow $\alpha_{D^*}\colon \overline{\Vc^*}\times\RR\to\overline{\Vc^*}$, 
	and by Lemma~\ref{comp} this is further equivalent 
	to any of the following assertions: 
	\begin{enumerate}[(a)]
		\item\label{ax+b_proof_item_a} The $C^*$-algebra $\RR\ltimes_{\alpha_{D^*}}\Mc$ fails to be quasidiagonal. 
		\item\label{ax+b_proof_item_b} There exists no embedding of  
		$\RR\ltimes_{\alpha_{D^*}}\Mc$ into any AF-algebra. 
	\end{enumerate}
	Here we denoted  $\Mc:=\CC\1+\Cc_0(\Vc^*)\simeq\Cc(\overline{\Vc^*})$.  
	Recall from \cite[Ex. 3.16]{Wi07} that there is a $*$-isomorphism 
	\begin{equation}
	\label{ax+b_proof_eq1}
	C^*(G)\simeq\RR\ltimes_{\alpha_{D^*}}\Cc_0(\Vc^*). 
	\end{equation}
	Then, by Lemma~\ref{crossunit} and 
	\eqref{ax+b_proof_eq1}, the above Assertion~\eqref{ax+b_proof_item_a} is equivalent to the fact that 
	$C^*(G)$ fails to be quasidiagonal, 
	and this completes the proof of $\eqref{ax+b_spec}\iff\eqref{ax+b_qsd}$.  
	
	For $\eqref{ax+b_emb}\implies\eqref{ax+b_spec}$ we note that if 
	\eqref{ax+b_spec} does not hold true, then 
	the above assertion~\eqref{ax+b_proof_item_b} also fails to be true 
	(by the above discussion), hence there exists an embedding of $\RR\ltimes_{\alpha_{D^*}}\Mc$ into some AF-algebra. 
	In particular, the ideal 	$C^*(G)\simeq\RR\ltimes_{\alpha_{D^*}}\Cc_0(\Vc^*)$ of 
	$\RR\ltimes_{\alpha_{D^*}}\Mc$ embeds into some AF-algebra, hence \eqref{ax+b_emb} fails to be true, and this completes the proof. 
\end{proof}

We now draw a corollary of the above theorem that also needs Theorem~\ref{main1}, proved in the next section. 
We point out however that the proof of Theorem~\ref{main1} 
relies only on Theorem~\ref{ax+b} and not on this corollary. 
We decided to give this result here in order to draw a more complete picture of the quasidiagonality properties of the generalized $ax+b$-groups. 

\begin{corollary}\label{ax+b_cor1}
	Let $\Vc$ be a finite-dimensional real vector space 
	and select any $D\in\End(\Vc)$ whose spectrum does not contain any nonzero purely imaginary eigenvalues. 
	Then $G_D$ is an exponential solvable Lie group, and moreover $C^*(G_D)$ is quasidiagonal but not strongly quasidiagonal if and only if $D$ satisfies the following spectral condition: 
	\begin{itemize}
		\item One has $\sigma(D)\ne\{0\}$ and there exist $z_1,z_2\in\sigma(D)$ with $\Re z_1\le 0\le \Re z_2$. 
	\end{itemize}
\end{corollary}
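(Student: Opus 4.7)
\textbf{Proof proposal for Corollary~\ref{ax+b_cor1}.} The plan is to combine the additional equivalence in Theorem~\ref{main1} (available once we know $G_D$ is exponential) with Theorem~\ref{ax+b}, after first verifying that $G_D$ is exponential and type~I.

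First, I would check the exponentiality of $G_D$. Writing an arbitrary element of $\gg_D=\RR\ltimes_D\Vc$ as $X=(t,v)$, a direct computation shows that $\ad_X$ is block lower-triangular with diagonal blocks $0$ on the $\RR$-summand and $tD$ on the $\Vc$-summand, so
$$\sigma(\ad_X)=\{0\}\cup t\sigma(D).$$
Under the standing hypothesis that $\sigma(D)$ contains no nonzero purely imaginary number, the same property passes to $\sigma(\ad_X)$ for every $X\in\gg_D$. This is the classical criterion for $G_D$ to be exponential solvable (and in particular type~I), which is precisely what is needed in order to invoke the final equivalence of Theorem~\ref{main1}.

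Second, with $G_D$ exponential, Theorem~\ref{main1} yields that $C^*(G_D)$ is strongly quasidiagonal if and only if $G_D$ is nilpotent. The same block-triangular form of the $\ad$ operators shows that $\gg_D$ is nilpotent if and only if $D$ is a nilpotent endomorphism of $\Vc$, equivalently $\sigma(D)=\{0\}$. Hence ``$C^*(G_D)$ is not strongly quasidiagonal'' translates into $\sigma(D)\ne\{0\}$. On the other hand, Theorem~\ref{ax+b} characterizes non-quasidiagonality of $C^*(G_D)$ as $\sigma(D)$ being contained in a single open half-plane $\{\Re z>0\}$ or $\{\Re z<0\}$; by negation, $C^*(G_D)$ is quasidiagonal precisely when there exist $z_1,z_2\in\sigma(D)$ with $\Re z_1\le 0\le\Re z_2$.

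The conjunction of these two characterizations is exactly the spectral condition stated in the corollary, so no further argument is required. The only genuine subtlety is the exponentiality check in the first step, since without it the nilpotency equivalence of Theorem~\ref{main1} is unavailable and the dichotomy ``quasidiagonal but not strongly quasidiagonal'' would need a different handle; everything else is a routine combination of the two main results already proved in the paper.
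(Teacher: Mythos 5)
Your proposal is correct and follows essentially the same route as the paper: exponentiality of $G_D$ via the spectral condition on $\ad$, the equivalence ``strongly quasidiagonal $\iff$ nilpotent $\iff$ $\sigma(D)=\{0\}$'' from Theorem~\ref{main1}, and the quasidiagonality criterion from Theorem~\ref{ax+b}. The only difference is that you spell out the computation $\sigma(\ad_{(t,v)})=\{0\}\cup t\,\sigma(D)$, which the paper leaves implicit.
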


\begin{proof}
	Existence of $z_1,z_2\in\sigma(D)$ with $\Re z_1\le 0\le \Re z_2$ is equivalent to the fact that Assertion~\eqref{ax+b_spec} in Theorem~\ref{ax+b} does not hold true, and this is equivalent to the fact that $C^*(G_D)$ is quasidiagonal. 	
	
	Moreover, the spectrum of  $D\in\End(\Vc)$ does not contain nonzero purely imaginary eigenvalues if and only if the solvable Lie group $G_D$ is exponential (see e.g., \cite{FuLu15}). 
	Then, by Theorem~\ref{main1}, $C^*(G_D)$ is not strongly quasidiagonal if and only if the Lie group $G_D$ is not nilpotent, 
	which is further equivalent to the fact that 
	that the map~$D$ is not nilpotent, that is, $\sigma(D)\ne\{0\}$. 
\end{proof}

\begin{corollary}\label{ax+b_cor2}
	Let $\Vc$ be a real vector space with $m:=\dim\Vc<\infty$, 
	and denote by $\End_0(\Vc)$ the set of all 
	endomorphisms $D\in\End(\Vc)$ with $\Vc^D_0=\Ker D$. 
	
	There exist exactly 
	$$N(m):=\sum\limits_{n_0=0}^{m}\Bigl(1+\Bigl[\frac{m-n_0}{2}\Bigr]\Bigr)\ge m+1$$
	$*$-isomorphism classes in the set of $C^*$-algebras $\{C^*(G_D)\mid D\in\End_0(\Vc)\}$, and exactly one of these $*$-isomorphism classes contains non-quasi\-diagonal $C^*$-algebras.
\end{corollary}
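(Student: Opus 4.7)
Given $D\in\End_0(\Vc)$, let $n_0:=\dim\ker D$ and let $n_+,n_-$ be the real dimensions of the direct sums of the Lyapunov subspaces of $D$ corresponding to eigenvalues with strictly positive (resp.\ strictly negative) real part, so $n_0+n_++n_-=m$. Attach to $D$ the unordered triple $\tau(D):=(n_0,\{n_+,n_-\})$. The proof reduces to three claims:
(I) $\tau(D_1)=\tau(D_2)$ implies $C^*(G_{D_1})\simeq C^*(G_{D_2})$;
(II) $\tau(D_1)\ne\tau(D_2)$ implies $C^*(G_{D_1})\not\simeq C^*(G_{D_2})$;
(III) the single value of $\tau$ for which $C^*(G_D)$ is not quasidiagonal is $(0,\{0,m\})$.
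For each fixed $n_0\in\{0,\dots,m\}$ the number of unordered bipartitions $\{n_+,n_-\}$ of $m-n_0$ into non-negative summands is $1+\lfloor(m-n_0)/2\rfloor$; summing these yields $N(m)$, and each summand being positive gives $N(m)\ge m+1$. Claim (III) is immediate from Theorem~\ref{ax+b}: its condition~\eqref{ax+b_spec} excludes $0\in\sigma(D)$, forcing $n_0=0$ and $\{n_+,n_-\}=\{0,m\}$.

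For (I), the isomorphism \eqref{ax+b_proof_eq1} identifies $C^*(G_D)\simeq\RR\ltimes_{\alpha_{D^*}}C_0(\Vc^*)$. The $\alpha_{D^*}$-invariant splitting $\Vc^*=\Vc_0^*\oplus\Vc_1^*$, with $\alpha_{D^*}$ trivial on $\Vc_0^*$ and hyperbolic on $\Vc_1^*$, gives
\[
	C^*(G_D)\simeq C_0(\Vc_0^*)\otimes\bigl(\RR\ltimes C_0(\Vc_1^*)\bigr),
\]
where the crossed product on the right uses the restriction of $\alpha_{D^*}$ to $\Vc_1^*$. The Lie-group isomorphism $G_D\simeq G_{-D}$ given by $(t,v)\mapsto(-t,v)$ accounts for the unordered pair $\{n_+,n_-\}$. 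The remaining ingredient is the classical structural stability of hyperbolic linear flows: any two such flows on $\RR^{n_++n_-}$ with the same stable-unstable dimensions $(n_+,n_-)$ are topologically conjugate, providing an $\RR$-equivariant $*$-isomorphism of the coefficient algebras, hence of the crossed products.

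The main obstacle is (II). Connes' Thom isomorphism gives $K_*(C^*(G_D))\simeq K_{*-m-1}(\CC)$, so $K$-theory detects only $m$ and cannot separate the classes within fixed $m$. A natural finer invariant is the primitive ideal space $\operatorname{Prim}(C^*(G_D))$, which for this crossed product is the quasi-orbit space of the $\RR$-action on $\Vc^*$: its fixed-point locus is exactly $\Vc_0^*$, whose topological dimension recovers $n_0$. The unordered pair $\{n_+,n_-\}$ can then be extracted from the attractor--repeller structure on the hyperbolic complement (cf.\ Proposition~\ref{attr}), which differs topologically for distinct signatures; for instance, the case $\{0,k\}$ is distinguished from $\{p,q\}$ with $p,q>0$ by the existence of a nontrivial attractor-repeller pair in the hyperbolic factor, while different unordered $\{p,q\}$'s with $p,q>0$ are distinguished by the dimensions of the maximal invariant subsets on which all quasi-orbits converge to the fixed-point set.
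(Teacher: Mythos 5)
Your invariant $\tau(D)=(n_0,\{n_+,n_-\})$ and your counting are exactly those of the paper, and your Claim (III) matches the paper's application of Theorem~\ref{ax+b}. The difference is that the paper does not prove the classification at all: it quotes \cite[Th.~2.5]{LiLu13}, which asserts precisely that $C^*(G_{D_1})\simeq C^*(G_{D_2})$ if and only if $n_0$ and $\{n_+,n_-\}$ agree, and then only counts. You instead try to reprove both directions. Your Claim (I) (sufficiency) is essentially sound: the splitting $\Vc^*=\Vc_0^*\oplus\Vc_1^*$ for semisimple $D$, the time-reversal isomorphism $G_D\simeq G_{-D}$, and topological conjugacy of hyperbolic linear flows with equal stable/unstable dimensions do combine to give an $\RR$-equivariant isomorphism of $C_0(\Vc^*)$'s and hence of the crossed products; this is close in spirit to how the cited classification is obtained.

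The genuine gap is Claim (II), which you yourself identify as the main obstacle but then only sketch. First, for $\RR\ltimes C_0(\Vc^*)$ the primitive ideal space is \emph{not} the quasi-orbit space: it fibers over the quasi-orbit space with fiber the dual of the stabilizer, so over each fixed point of $\alpha_{D^*}$ (i.e.\ over all of $\Vc_0^*$) there sits a copy of $\widehat{\RR}\simeq\RR$; any argument reading off $n_0$ from ``the dimension of the fixed-point locus'' must first locate that locus inside $\operatorname{Prim}$ in a manner invariant under $*$-isomorphism (e.g.\ via the space of characters, which is $\RR^{1+n_0}$, the dual of the abelianization). Second, and more seriously, your proposed separation of distinct unordered pairs $\{p,q\}$ with $p,q>0$ and $p+q$ fixed --- ``the dimensions of the maximal invariant subsets on which all quasi-orbits converge to the fixed-point set'' --- is a statement about the dynamical system, not about the $C^*$-algebra; you give no argument that this quantity is computable from $\operatorname{Prim}(C^*(G_D))$ or from any other $*$-isomorphism invariant, and as you note $K$-theory cannot do it. This is exactly the content that the paper outsources to \cite[Th.~2.5]{LiLu13}; without either citing that result or supplying a genuine invariant-theoretic argument, the claim that there are \emph{exactly} (rather than at most) $N(m)$ classes is not established.
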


\begin{proof}
For every $D\in\End_0(\Vc)$ we denote $n_0^D:=\dim(\Ker D)$. 
 	It follows by Theorem~\ref{isom-dir} and Theorem~\ref{isom-conv} 
	that for $D_1,D_2\in\End_0(\Vc)$ one has a $*$-isomorphism $C^*(G_{D_1})\simeq C^*(G_{D_2})$ if and only if one has both  $n_0^{D_1}=n_0^{D_2}$ and 
	$\{n^{D_1}_\epsilon\mid\epsilon=\pm\}=\{n^{D_2}_\epsilon\mid\epsilon=\pm\}$. 
	Therefore, it easily follows that, with $N(m)$ defined in the statement, 
	there exist exactly 
	$N(m)$
	$*$-isomorphism classes in the set of $C^*$-algebras $\{C^*(G_D)\mid D\in\End_0(\Vc)\}$. 
	By Theorem~\ref{ax+b}, exactly one of these $*$-isomorphism classes corresponds to non-quasi\-diagonal $C^*$-algebras, 
	namely for $D\in\End_0(\Vc)$ with $n_0^D=n^D_{+}n^D_{-}=0$. 
\end{proof}

As an example of the above Theorem~\ref{ax+b},  we obtain that the $C^*$-algebra of the Mautner group $\RR^4 \rtimes_{\alpha_D} \RR$ is quasidiagonal, 
where 
$$D=\begin{pmatrix} \ie & 0 \\
                        0 & \ie\theta 
                        \end{pmatrix}\in M_2(\CC)\simeq \End(\RR^4)$$
with $\theta\in\RR\setminus\QQ$.                          
It is well known that it is a solvable Lie group that is not of type~I 
(see for instance \cite{AuMo66} or \cite[Prop. 2.1]{AbArSe04}).

Existence of solvable Lie groups whose $C^*$-algebras are not quasidiagonal, which follows by Corollaries \ref{ax+b_cor1}--\ref{ax+b_cor2}, shows that the analogue of Rosenberg's conjecture 
(see \cite[Cor. C]{TiWhWi16}) does not carry over directly to non-discrete groups.

\section{Proof of Theorem~\ref{main1}}
\label{sect3} 

We first establish the most direct implications between the properties 
from the statement. 
	
	$\eqref{main1_CCR}\implies\eqref{main1_strqsd}$: 
	This is well known.   
	See for instance \cite[Prop. 8(2)]{Hd87}. 
	
	$\eqref{main1_ad}\iff\eqref{main1_CCR}$: 
	This follows by \cite[Ch. V, Thms. 1--2]{AuMo66}. 
	
	
	For any exponential solvable Lie group $G$, one has  
	$\eqref{main1_CCR}\iff\eqref{main1_nilp}$ 
	by \cite[Rem. 2.11]{BBG16}. 
	Finally, $\eqref{main1_nilp}\iff\eqref{main1_nilp2}$ by \cite[Th. 5.3.31]{FuLu15}.

We now begin the preparations for 
the remaining step in the proof of Theorem~\ref{main1}, namely  $\eqref{main1_strqsd}\implies\eqref{main1_CCR}$. 

\begin{proposition}\label{comp1}
	Let $G$ be a locally compact group with a $C^*$-dynamical system $G\times \Ac\to\Ac$, $(g,a)\mapsto g\cdot a$ and a normal subgroup~$K\subseteq G$ 
	for which we define the fixed-point subalgebra $\Ac^K:=\{a\in\Ac\mid (\forall k\in K)\ k\cdot a=a\}$.   
	Then the following assertions hold: 
	\begin{enumerate}[{\rm(i)}]
		\item\label{comp1_item1} $\Ac^K$ is a $G$-invariant closed $*$-subalgebra of $\Ac$. 
		\item\label{comp1_item2} If $K$ is additionally assumed to be a compact subgroup of $G$, 
		$S$ is another amenable closed subgroup of $G$ with $G=SK$ and $S\cap K=\{\1\}$, and, the probability Haar measure of $K$ is invariant under the map $k\mapsto sks^{-1}$ for every $s\in S$, then there exists an injective $*$-morphism $S\ltimes \Ac^K\hookrightarrow G\ltimes\Ac$.
	\end{enumerate}
\end{proposition}

\begin{proof}
	For every $a\in\Ac$, $g\in G$ and $k\in K$ one has 
	$$k\cdot(g\cdot a)=g\cdot((g^{-1}kg)\cdot a)$$
	where $g^{-1}kg\in K$ since $K$ is a normal subgroup of $G$. 
	Therefore, if $a\in \Ac^K$ then $g\cdot a\in\Ac^K$ for arbitrary $g\in G$. 
	
	We will now prove Assertion~\eqref{comp1_item2}. 
	Since $K$ is a compact group, one has the injective $*$-morphism 
	$$\eta\colon \Ac^K\to L^1(K,\Ac)\hookrightarrow K\ltimes\Ac,
	\quad (\eta(a))(k)=a 
	\text{ for all }a\in\Ac^K,\, k\in K.$$
	(See \cite{Ro79}.) 
	
	We now make explicit certain natural actions of $S$ on $\Ac^K$ and on $K\ltimes\Ac$, and check that $\eta$ is $S$-equivariant with respect to these actions. 
	It follows by Assertion~\eqref{comp1_item1} that the action of $G$ on $\Ac$ induces an action of $G$ on $\Ac^K$, which further gives by restriction an action of $S$ on $\Ac^K$. 
	To describe the action of $S$ on $K\ltimes\Ac$, we first note the semidirect product decomposition $G=S\ltimes K$, which follows by the hypothesis. 
	We then obtain by \cite[Prop. 3.11]{Wi07} a $C^*$-dynamical system 
	$S\times (K\ltimes\Ac)\to K\ltimes \Ac$ given by  
	$$(s\cdot f)(k)=s\cdot (f(s^{-1}ks))\text{ for all }s\in S\text{ and } f\in L^1(K,\Ac)\hookrightarrow K\ltimes\Ac,$$ 
	where we have used the fact that $K$ is a normal subgroup of $G$ and the action of $S$ on $\Ac$ given by the $C^*$-dynamical system $G\times \Ac\to\Ac$. 
	With respect to these actions of $S$ on $\Ac^K$ and on $K\ltimes\Ac$, 
	one clearly has 
	$$(\forall s\in S)(\forall a\in\Ac^K)\quad 
	\eta(s\cdot a)=s\cdot(\eta(a)).$$
	Since $S$ is an amenable group, crossed products by $L$ are canonically isomorphic to their corresponding reduced crossed products, hence by the above $S$-equivariance property of the  injective $*$-morphism $\eta$  obtain an injective $*$-morphism 
	$$\widetilde{\eta}\colon S\ltimes\Ac^K\to S\ltimes(K\ltimes\Ac)$$
	for instance by \cite[Prop. 7.7.9]{Pe79}. 
	Using the $*$-isomorphism $S\ltimes(K\ltimes\Ac)\simeq (S\ltimes K)\ltimes\Ac$ given by \cite[Prop. 3.11]{Wi07}, 
	the above map $\widetilde{\eta}$ gives an injective $*$-morphism $S\ltimes \Ac^K\hookrightarrow G\ltimes\Ac$, we are done. 
\end{proof}

\begin{corollary}\label{comp2}
	Let $G$ be a locally compact group two closed subgroups $K$ and $S$ satisfying $G=SK$, $S\cap K=\{\1\}$, and $sk=ks$ for all $s\in S$ and $k\in K$. 
	We also assume that $K$ is compact and we fix 
	a $C^*$-dynamical system $G\times \Ac\to\Ac$, $(g,a)\mapsto g\cdot a$. 
	
	Then $\Ac^K:=\{a\in\Ac\mid (\forall k\in K)\ k\cdot a=a\}$ is an $S$-invariant closed $*$-subalgebra of~$\Ac$ whose corresponding crossed product $S\ltimes\Ac^K$ is a quasidiagonal $C^*$-algebra (respectively, is AF-embeddable) if $G\ltimes\Ac$ is. 
\end{corollary}

\begin{proof}
	It follows directly from Proposition~\ref{comp1}\eqref{comp1_item2} 
	that $S\ltimes\Ac^K$ is $*$-isomorphic to a closed $*$-subalgebra 
	of $G\ltimes\Ac$.
\end{proof}

\begin{example}\label{eucl}
	\normalfont
	For every integer $n\ge 1$, we denote by $SO(n)$ its corresponding special orthogonal group, that is, the group of all orthogonal matrices $T\in M_n(\RR)$ with $\det{T}=1$. It is well known that
	$SO(n)$ is a connected compact Lie group.

	Since the tautological action of $K:=\SO(n)$ on $\RR^n$ is an action by linear maps, 
	it commutes with the action of the multiplicative group $(0,\infty)$ on $\RR^n$ 
	by $(r,x)\mapsto rx$. 
	One has the group isomorphism  $S:=(\RR,+)\simeq((0,\infty),\cdot)$, $s\mapsto\ee^s$, hence one thus obtains an action 
	of the group $G:=K\times S$ on $\RR^n$, 
	\begin{equation}
	\label{eucl_eq0}
	\RR^n\times G \to\RR^n,\quad 
	(x, (T,s))\mapsto T(e^sx)=\ee^s(Tx)
	\end{equation}
	for $(T,s)\in K\times S=G$ and $x\in\RR^n$.
	This defines a $C^*$-dynamical system $G\times\Ac\to\Ac$ 
	for the commutative $C^*$-algebra $\Ac:=\Cc_0(\RR^n)$. 
	
	In the notation of Corollary~\ref{comp2} one clearly has the $*$-isomorphism 
	$$\Cc_0([0,\infty))\to\Ac^K,\quad f\mapsto
	f(\vert\cdot\vert),$$
	where $\vert\cdot\vert$ is the Euclidean norm on $\RR^n$. 
	One can transport the action of $S$ from $\Ac^K$ to $\Cc_0([0,\infty))$ via this $*$-isomorphism, and one thus obtains the $C^*$-dynamical system 
	$$S\times \Cc_0([0,\infty))\to \Cc_0([0,\infty)),\quad 
	(s,h)\mapsto s\cdot \varphi,$$
	where $(s\cdot \varphi)(r):=\varphi(\ee^s r)$ for all $s\in \RR$, $r\in[0,\infty)$, and $\varphi\in\Cc_0([0,\infty))$. 
	
	On the other hand, the unitization of the commutative $C^*$-algebra $\Cc_0([0,\infty))$ is $*$-isomorphic to 
	$\Cc([0,\infty])$, and one has a canonical extension 
	of the above action of $S$ on $\Cc_0([0,\infty))$ to 
	an action of $S$ on $\Cc([0,\infty])$. 
	The corresponding crossed product $S\ltimes \Cc([0,\infty])$ is the $C^*$-algebra of the flow 
	$$S\times [0,\infty]\to[0,\infty],\quad (s,r)\mapsto \ee^s r.$$
	It is clear that the pair $(\{\infty\},\{0\})$ is a nontrivial attractor-repeller pair for this flow, hence it follows by
	Proposition~\ref{comp} that the $C^*$-algebra  $S\ltimes \Cc_0([0,\infty))$ is not quasidiagonal. 
	Therefore $S\ltimes\Ac^K$ is not quasidiagonal, and then 
	by Corollary~\ref{comp2} we obtain that the $C^*$-algebra 
	$G\ltimes\Ac$, that is, $(\SO(n)\times \RR)\ltimes\Cc_0(\RR^n)$, 
	is not quasidiagonal. 
\end{example}

\begin{example}\label{eucl_rem}
	\normalfont
	In connection with Example~\ref{eucl}, we recall that the Euclidean motion group $E(n):=\RR^n \rtimes \SO(n)$ is the semidirect product defined via the tautological action of 	$K:=\SO(n)$ on $\RR^n$, hence in particular 
	$$C^*(E(n))\simeq\SO(n)\ltimes\Cc_0(\RR^n).$$
	This is a liminal algebra for instance by \cite[Th.~3.1]{Wi81}, therefore it is strongly quasidiagonal. 
	The $C^*$-algebras of such groups were recently studied in \cite{AraH15}. 
	
	On the other hand, there is also the action of $S=(\RR,+)$ on $\RR^n$ as in Example~\ref{eucl}, which defines the following action of $\RR$ by automorphisms of $E(n)$, 
	$$\RR\times E(n)\to E(n),\quad (s,(x,T ))\mapsto (\ee^s x, T )$$
	and which further defines a natural action of $\RR$ on $C^*(E(n))$. 
	We will show that the corresponding crossed product $\RR\ltimes C^*(E(n))$ is not quasidiagonal. 
	(We recall however from \cite[Cor.~11.2]{Br04} that the crossed product of any quasidiagonal $C^*$-algebra by a separable compact group remains quasidiagonal.)
	
	To this end we first note the Lie group isomorphism $$ E(n)\rtimes\RR \to \RR^n \rtimes  (\SO(n)\times\RR),\quad 
	((x,T),  s)\mapsto (x, (T,s))$$ 
	which defines a $*$-isomorphism $C^*(E(n) \rtimes \RR)\simeq C^*(\RR^n\rtimes (\SO(n)\times\RR))$. 
	Since $C^*(E(n) \rtimes \RR )\simeq \RR\ltimes C^*(E(n))$ 
	and $C^*(\RR^n \rtimes (\SO(n)\times\RR))\simeq (\SO(n)\times\RR)\ltimes\Cc_0(\RR^n)$, 
	it then follows by Example~\ref{eucl} that 
	$\RR\ltimes C^*(E(n))$  is not quasidiagonal. 
\end{example}

We now prove that if \eqref{main1_CCR} in Theorem~\ref{main1} fails to be true, 
then \eqref{main1_strqsd} is not true. 
To this end we show that there exists a closed 2-sided ideal $\Jc\subseteq C^*(G)$ for which the quotient $C^*(G)/\Jc$ is not strongly quasidiagonal, 
and then $C^*(G)$ cannot be strongly quasidiagonal. 
The proof of this fact is divided in three steps. 
The first two show how  to reduce the problem to certain low-dimension groups, while the third step treats the case of  those low-dimension groups.

\subsection*{Step 1}
It follows by \cite[Th. 1--2 and Prop. 2.2, Ch. V]{AuMo66} 
that if $G$ is a connected simply connected solvable Lie group 
of type I, then $G$ fails to be a liminal group 
if and only if there exists a connected simply connected closed normal subgroup $N\subseteq G$ 
for which the quotient Lie group $G/N$ is isomorphic to one of the following Lie groups: 
\begin{enumerate}
	\item $S_2:=\RR\rtimes\RR$, the connected real $ax+b$-group, defined via 
	$$\alpha\colon (\RR,+)\to\Aut(\RR,+), \quad \alpha(t)s=\ee^t s$$ 
	\item $S_3^\sigma:=\RR^2\rtimes_{\alpha^\sigma}\RR$, defined for $\sigma\in\RR\setminus\{0\}$ via  
	$$\alpha^\sigma\colon (\RR,+)\to\Aut(\RR^2,+), \quad 
	\alpha^\sigma(t)=\ee^{\sigma t}
	\begin{pmatrix}
	\hfill \cos t & \sin t \\
	-\sin t & \cos t
	\end{pmatrix}$$
	\item $S_4:=\RR^2\rtimes_{\beta}\RR^2$, defined via  
	$$\beta\colon (\RR^2,+)\to\Aut(\RR^2,+), \quad 
	\beta(t,s)=\ee^t
	\begin{pmatrix}
	\hfill \cos s & \sin s \\
	-\sin s & \cos s
	\end{pmatrix}$$
\end{enumerate}

\subsection*{Step 2}
Every short exact sequence of amenable locally compact groups 
$$\1\to N\to G\to G/N\to\1$$
leads to a short exact sequence of $C^*$-algebras 
$$0\to\Jc\to C^*(G)\to C^*(G/N)\to 0$$
for a suitable closed 2-sided ideal $\Jc$ of $C^*(G)$. 
This shows that in order to implement the method of proof described above,  
it suffices to check that the $C^*$-algebra of the above groups 
$S_2$, $S_3^\sigma$ with $\sigma\in\RR\setminus\{0\}$, and $S_4$ are not strongly quasidiagonal. 

\subsection*{Step 3}
Using Theorem~\ref{ax+b} it follows directly that the $C^*$-algebra of any of the groups 
$S_2$ and $S_3^\sigma$ with $\sigma\in\RR\setminus\{0\}$ is not strongly quasidiagonal.

We now discuss the case of the group $S_4$. 
It is easily seen that the surjective map  
$$S_4=\RR^2\ltimes_\beta\RR\to \RR^2\rtimes_\alpha (\RR\times\SO(2)),\quad 
(x,(t,s))\mapsto (x,(t,\begin{pmatrix}
\hfill \cos s & \sin s \\
-\sin s & \cos s
\end{pmatrix}))$$
is a group homomorphism  (whose kernel is isomorphic to $\ZZ$), 
where $\alpha$ is the group action from \eqref{eucl_eq0} for $n=2$. 
As in Step~2 above, one then obtains a short exact sequence of $C^*$-algebras 
$$0\to\Ic\to C^*(S_4)\to C^*(\RR^2\rtimes_\alpha (\RR\times\SO(2)))\to 0$$
for a suitable closed 2-sided ideal $\Ic$ of $C^*(S_4)$. 
The special case $n=2$ of Example~\ref{eucl} shows that the 
$C^*$-algebra $C^*(\RR^2\rtimes_\alpha (\RR\times\SO(2)))$ is not quasidiagonal, 
and then the above short exact sequence shows that $C^*(S_4)$ cannot be strongly quasidiagonal. 

This completes the proof of Theorem~\ref{main1}.

\section{On faithful tracial states of $C^*$-algebras of solvable groups}
\label{sect4} 

In this final section we show that the $C^*$-algebras of the groups we have been studied in the previous sections do not admit faithful tracial states. Thus we are in a situation that is complementary to the situation in \cite{TiWhWi16}:
The reduced  $C^*$-algebra of any discrete group admits a  canonical faithful tracial state which is quasidiagonal when the group is countable and amenable, which in turn implies the quasidiagonality of the reduced $C^*$-algebra of the group.

\begin{lemma}\label{SophusLie}
	If $G$ is a connected topological solvable group and $\pi\colon G\to\Bc(\Hc)$ is a continuous irreducible representation with $\dim\Hc<\infty$, then $\dim\Hc\le 1$.
\end{lemma}

\begin{proof}
	See \cite[Ch. 8, \S 1, Th. 1]{BaRa86}.
\end{proof}

The following Lemma is well known, but we insert its proof for the sake of completness. 

\begin{lemma}\label{faith-tr}
	Let $\Ac$ be a $C^*$-algebra with a faithful state $\varphi\colon\Ac\to\CC$. 
	Assume that  $\pi_\varphi\colon \Ac\to\Bc(\Hc_\varphi)$ is a $*$-representation with a cyclic vector $x_\varphi\in\Hc_\varphi$ with $\Vert x_\varphi\Vert=1$ and $\varphi(a)=(\pi_\varphi(a)x_\varphi\mid x_\varphi)$ for every $a\in\Ac$. 
	Then
	the following assertions hold: 
	\begin{enumerate}[{\rm(i)}]
		\item One has $\Ker\pi_\varphi=\{0\}$. 
		\item If the faithful state $\varphi$ is moreover a tracial state, then the functional 
		$$\tau_\varphi\colon\pi_\varphi(\Ac)''\to\CC,\quad   \tau_\varphi(T):=(Tx_\varphi\mid x_\varphi)$$
		is a normal faithful tracial state on the von Neumann algebra~$\pi_\varphi(\Ac)''$. 
	\end{enumerate}
\end{lemma}

\begin{proof}
	For the first assertion, if $\pi_\varphi(a)=0$ then 
	$(\pi_\varphi(a^*a)x_\varphi\mid x_\varphi)=\varphi(a^*a)=0$. 
	Since the state $\varphi$ is faithful, we then obtain $a^*a=0$, hence $a=0$. 
	
	For the second assertion, let us assume that the faithful state $\varphi$ is tracial. 
	The functional $\tau_\varphi$ in the statement is clearly a state of $\pi_\varphi(\Ac)''$ and is continuous with respect to the weak operator topology, hence it is in particular a normal state of $\pi_\varphi(\Ac)''$. 
	
	In order to check that $\tau_\varphi$ is a tracial state, 
	we first note that $\tau_\varphi(\pi_\varphi(a))=\varphi(a)$ for every $a\in\Ac$. 
	Therefore, if $T_j=\pi_\varphi(a_j)$ for $j=1,2$, then, 
	using the fact that $\varphi$ is a tracial state, we obtain 
	$$\tau_\varphi(T_1T_2)=\tau_\varphi(\pi_\varphi(a_1a_2))
	=\varphi(a_1a_2)=\varphi(a_2a_1)\tau_\varphi(\pi_\varphi(a_2a_1))
	=\tau_\varphi(T_2T_1).$$
	Since we have already seen that $\tau_\varphi$ is a normal state 
	and on the other hand the bicommutant theorem ensures that $\pi_\varphi(\Ac)$ is dense in $\pi_\varphi(\Ac)''$ in the strong operator topology, the equality 
	$\tau_\varphi(T_1T_2)=\tau_\varphi(T_2T_1)$ extends to all 
	$T_1,T_2\in\pi_\varphi(\Ac)''$. 
	
	It remains to prove that $\tau_\varphi$ is faithful. 
	To this end, let $T\in\pi_\varphi(\Ac)''$ 
	with $\tau_\varphi(T^*T)=0$, which is equivalent to $Tx_\varphi=0$. 
	Then for every $a\in\Ac$ one has 
	$$\Vert T\pi_\varphi(a)x_\varphi\Vert^2
	=(\pi_\varphi(a)^*T^*T\pi_\varphi(a)x_\varphi \mid x_\varphi)
	=\tau_\varphi(\pi_\varphi(a)^*T^*T\pi_\varphi(a)).$$
	We have already seen that $\tau_\varphi$ is a tracial state of $\pi_\varphi(\Ac)''$, hence we further obtain 
	$$\Vert T\pi_\varphi(a)x_\varphi\Vert^2
	=\tau_\varphi(\pi_\varphi(a)\pi_\varphi(a)^*T^*T)
	=(\pi_\varphi(a)\pi_\varphi(a)^*T^*T x_\varphi \mid x_\varphi)=0$$
	where we used $Tx_\varphi=0$. 
	Thus $T\pi_\varphi(a)x_\varphi=0$ for every $a\in\Ac$. 
	Since $x_\varphi$ is a cyclic vector for the representation~$\pi_\varphi$, it follows that $T=0$, hence the state $\tau_\varphi$ is faithful, and we are done. 
\end{proof}

\begin{lemma}\label{RFD}
	If $\Ac$ is a $C^*$-algebra of type~I which has a  
	faithful tracial state, then the following assertions hold:  
	\begin{enumerate}[{\rm(i)}]
		\item If $\{\pi_j\}_{j\in J}$ is a complete system of distinct representatives of the unitary equivalence classes of finite-dimensional irreducible $*$-representations of $\Ac$, 
		then the $*$-representation $\bigoplus_{j\in J}\pi_j$ 
		is faithful. 
		\item If for every irreducible $*$-representation $\pi\colon\Ac\to\Bc(\Hc)$ one has $\dim\Hc\le 1$, then $\Ac$ is commutative. 
	\end{enumerate}
\end{lemma}

\begin{proof}
	The first assertion is a more detailed statement of \cite[Prop. 7.1.8]{BrOz08}, whose proof also requires the above Lemma~\ref{faith-tr}. 
	The second assertion follows directly from the first assertion. 
\end{proof}

\begin{proposition}
	Let $G$ be a connected locally compact solvable group of type~I. 
	If $C^*(G)$ has a faithful tracial state, then $G$ is commutative. 
\end{proposition}

\begin{proof}
	It is well known that the group $G$ is commutative if and only if the Banach algebra $L^1(G)$ is commutative. 
	This is further equivalent to the property that $C^*(G)$ be commutative, since $L^1(G)$ is dense in $C^*(G)$. 
	
	On the other hand, $C^*(G)$ is a $C^*$-algebra of type~I with a faithful tracial state, 
	and for every irreducible $*$-representation of $\pi\colon C^*(G)\to\Bc(\Hc)$ one has $\dim\Hc\le1$ as a direct consequence of Lemma~\ref{SophusLie}.  
	Therefore $C^*(G)$ is commutative by Lemma~\ref{RFD}, and this concludes the proof. 
\end{proof}


\end{document}